\g@addto@macro\normalsize{%
	\setlength\abovedisplayskip{4pt}
	\setlength\belowdisplayskip{4pt}
	\setlength\abovedisplayshortskip{4pt}
	\setlength\belowdisplayshortskip{4pt}
}
\numberwithin{equation}{section}
\crefname{section}{Section}{Sections}
\crefname{subsection}{Subsection}{Subsections}
\crefname{condition}{Condition}{Conditions}
\crefname{hypothesis}{Hypothesis}{Conditions}
\crefname{assumption}{Assumption}{Assumptions}
\crefname{lemma}{Lemma}{Lemmas}
\crefname{definition}{Definition}{Definitions}
\numberwithin{equation}{section}
\newtheorem{theorem} {Theorem}[section]
\newtheorem{proposition}[theorem]{Proposition}
\newtheorem{lemma}[theorem]{Lemma}
\newtheorem{corollary}[theorem]{Corollary}
\newtheorem{counter example}[theorem]{Counter Example}
\newtheorem{remark}[theorem] {Remark}
\newtheorem{definition}[theorem] {Definition}
\newtheorem{claim}[theorem] {Claim}
\def\CC{{\rm \kern.24em \vrule width.02em height1.4ex depth-.05ex \kern-.26emC}}
\def\TagOnRight
\def\AA{{it I} \hskip-3pt{\tt A}}
\def\QQ{\rlap {\raise 0.4ex \hbox{$\scriptscriptstyle |$}} {\hskip -0.1em Q}}
\newcommand{\vo}{\vec{o}\@ifnextchar{^}{\,}{}}
\def\YYint#1#2#3{{\setbox0=\hbox{$#1{#2#3}{\iint}$}
		\vcenter{\hbox{$#2#3$}}\kern-.50\wd0}}
\def\XXint#1#2#3{{\setbox0=\hbox{$#1{#2#3}{\int}$}
		\vcenter{\hbox{$#2#3$}}\kern-.50\wd0}}
\def\namedlabel#1#2{\begingroup
	\def\@currentlabel{#2}%
	\label{#1}\endgroup
}
\newcommand{\rmh}[1]{\mathpalette{\raisem@th{#1}}}
\newcommand{\raisem@th}[3]{\hspace*{-1pt}\raisebox{#1}{$#2#3$}}
\newcounter{desccount}
\newcommand{\descref}[2]{\hyperref[#1]{\textnormal{\textcolor{black}{}\textcolor{blue}{\bf #2}\textcolor{black}{}}}}
\newcommand{\dref}[2]{\hyperref[#1]{\textcolor{black}{(}\textcolor{blue}{\bf #2}\textcolor{black}{)}}}
\newcommand{\be} {\begin{eqnarray}}
\newcommand{\ee} {\end{eqnarray}}
\newcommand{\Bea} {\begin{eqnarray*}}
	\newcommand{\Eea} {\end{eqnarray*}}
\newcommand{\pa} {\partial}
\newcommand{\re}{\mathbb{R}}
\newcommand{\al} {\alpha}
\newcommand{\rr}{\rightarrow}
\newcommand{\dip}{\displaystyle}
\newcommand{\B} {\beta}
\newcommand{\de} {\delta}
\newcommand{\g} {\gamma}
\newcommand{\p}  {\prime}
\newcommand{\e}  {\epsilon}
\newcommand{\si} {\sigma}
\newcommand{\f}{\infty}
\newcommand{\R}{\mathbb{R}}
\newcommand{\ep}{\epsilon}
\newcommand{\lla} {\left\langle}
\newcommand{\rra} {\right\rangle}
\newcommand{\tm}{\bar{\textbf{m}}}
\newcommand{\tT}{\mathfrak{T}}
\newcommand{\tr}{\mathfrak{r}}
\newcommand{\mr}{\mathfrak{r}_\e}
\newcommand{\mv}{\mathfrak{u}_\e}
\newcommand{\mT}{\mathfrak{T}_\e}
\newcommand{\tS}{\bar{\mathcal{S}}}
\newcommand{\td}{\bar{\rho}}
\newcommand{\tms}{{\textbf{m}}}
\newcommand{\tSs}{{\mathcal{S}}}
\newcommand{\tds}{{\rho}}
\newcommand{\tQ}{\mathscr{Q}}
\newcommand{\tP}{\mathcal{P}\left(\tQ\right)}
\newcommand{\me}{\mathcal{M}^+\left(\bar{\Omega}\right)}
\newcommand{\tme}{\mathcal{M}^+\left(S^{N-1}\times\bar{\Omega}\right)}
\newcommand{\tY}{\mathcal{Y}}
\newcommand{\tDkin}{\mathfrak{D}_{\mbox{kin}}}
\newcommand{\tDint}{\mathfrak{D}_{\mbox{int}}}
\newcommand{\tDcon}{\mathfrak{D}_{\mbox{conv}}}
\newcommand{\dom}{\int\limits_{\si}^{\tau}\int\limits_{\Omega}} 
\newcommand{\domm}{\int\limits_{\si}^{\tau}\int\limits_{\Omega}}                      
\newcommand{\vt}{\vartheta}
\newcommand{\Ov}[1]{\overline{#1}}
\newcommand{\abs}[1]{\left| #1\right|}
\newcounter{whitney}
\newcounter{ineqcounter}
\def\ps@pprintTitle{%
	\let\@oddhead\@empty
	\let\@evenhead\@empty
	\def\@oddfoot{}%
	\let\@evenfoot\@oddfoot}
\newcommand{\refcheckize}[1]{%
	\expandafter\let\csname @@\string#1\endcsname#1%
	\expandafter\DeclareRobustCommand\csname relax\string#1\endcsname[1]{%
		\csname @@\string#1\endcsname{##1}\wrtusdrf{##1}}%
	\expandafter\let\expandafter#1\csname relax\string#1\endcsname
}
\newcommand{\mainsectionstyle}{%
	\renewcommand{\@secnumfont}{\bfseries}
	\renewcommand\section{\@startsection{section}{2}%
		\z@{.5\linespacing\@plus.7\linespacing}{-.5em}%
		{\normalfont\bfseries}}%
}
\xpatchcmd{\MaketitleBox}{\hrule}{}{}{}
\xpatchcmd{\MaketitleBox}{\hrule}{}{}{}
\date{}
\begin{document}
	
	\begin{frontmatter}
		
		\title{Uniqueness of  dissipative solutions to the complete Euler system 
		} 
		\author[myaddress]{Shyam Sundar Ghoshal}
		\ead{ghoshal@tifrbng.res.in}

		\author[myaddress]{Animesh Jana}
		\ead{animesh@tifrbng.res.in }
		
		\address[myaddress]{Centre for Applicable Mathematics,Tata Institute of Fundamental Research, Post Bag No 6503, Sharadanagar, Bangalore - 560065, India.}
		\vspace{-2ex}
		\begin{abstract}
			Dissipative solutions have recently been studied as a generalized concept for weak solutions of the complete Euler system. Apparently, these are expectations of suitable measure valued solutions. 
			Motivated from [Feireisl, Ghoshal and Jana, Commun. Partial Differ. Equ., 2019], we impose a one-sided Lipschitz bound on velocity component as uniqueness criteria for a weak solution in Besov space $B^{\al,\f}_{p}$ with $\al>1/2$. We prove that the Besov solution satisfying the above mentioned condition is unique in the class of dissipative solutions. In the later part of this article, we prove that the one sided Lipschitz condition gives uniqueness among weak solutions with the Besov regularity, $B^{\al,\f}_{3}$ for $\al>1/3$. 
			 Our proof relies on commutator estimates for Besov functions and the relative entropy method.
		\end{abstract}
		\begin{keyword}
			Euler equation\sep dissipative solution\sep uniqueness \sep Besov space  \sep entropy inequality.
			\MSC[2010] 35B40 \sep 35L65 \sep 35L67.
		\end{keyword}
		
	\end{frontmatter}
\vspace*{-0.75cm}
	\tableofcontents
	\section{Introduction}
	In 1757, Euler formulated the motion of a compressible inviscid fluid by the following system
	\begin{eqnarray}
	\pa_t\rho+\mbox{div}_x(\rho\textbf{u})&=&0,\label{I1}\\
	\pa_t(\rho\textbf{u})+\mbox{div}_x(\rho\textbf{u}\otimes\textbf{u})+\nabla_xp(\rho,\vt)&=&0,\label{I2}\\
	\pa_t\left(\frac{1}{2}\rho|\textbf{u}|^2+\rho e(\rho,\vt)\right)+\mbox{div}_x\left[\left(\frac{1}{2}\rho|\textbf{u}|^2+\rho e(\rho,\vt)+p(\rho,\vt)\right)\textbf{u}\right]&=&0\label{I3},
	\end{eqnarray}
	where $\rho(t,x),\textbf{u}(t,x), \vt(t,x)$ represent respectively the density, velocity and temperature of the fluid. In the above system, the quantity $e(\rho,\vt)$ represents the internal energy of the fluid and the function $p$ denotes pressure. We suppose that the entropy of the fluid $s(\rho,\vt)$ is satisfying the following inequality	
	\begin{eqnarray}
	\pa_t(\rho s(\rho,\vt))+\mbox{div}_x(\rho s(\rho,\vt)\textbf{u})&\geq&0.\label{I4}
	\end{eqnarray}
	We also assume that entropy $s$, pressure $p$ and internal energy $e$ are linked by the following  \textit{Gibb's identity} 
	\begin{equation}\label{Gibbs}
	\vt Ds(\rho,\vt)=De(\rho,\vt)+p(\rho,\vt)D\left(\frac{1}{\rho}\right).
	\end{equation}
	For an ideal gas, the pressure $p$ and the internal energy $e$ are related by the following identity
	\begin{equation}\label{CES}
	p=\rho(\g-1)e,
	\end{equation}
	where the constant $\g$ is the adiabatic index determined as $\g=c_{P}/c_{V}>1$ and $c_{P},c_{V}$ are the specific heat capacities at constant pressure and volume respectively. The identity (\ref{CES}) is also called the caloric equation of states. In addition, we would like to state the Boyle's law:
	\begin{equation}\label{Boyle}
	p=\rho\vt.
	\end{equation}
    This yields $e=c_{V}\vt$ for $c_{V}(\g-1)=1$. 
    \par To avoid the kinematic boundary terms we assume that solutions are spatially periodic and we work with the following domain for space variable,
    \begin{equation}\label{domain}
    \Omega=\left([-1,1]_{\{\pm1\}}\right)^N.
    \end{equation} 
    We are mostly interested in the cases where $N=2,3$.
	\par It is interesting to note that the identities (\ref{Gibbs}), (\ref{CES}) and (\ref{Boyle}) give the following structure of the entropy $s$,
	\begin{equation*}
		s(\rho,\vt)=\log\left(\frac{\vt^{c_{V}}}{\rho}\right).
	 \end{equation*}
	 
	 \par Kato \cite{Kato} established the local existence of a smooth solution for the general hyperbolic system. It is well known that due to hyperbolicity, singularity can appear in finite time even for the smooth data (see for instance, \cite{Smoller}). That is why 
	  it is preferable to work with weak solution for the system (\ref{I1})--(\ref{I3}). We use the following weak formulation of the Euler system \eqref{I1}--\eqref{I3} corresponding to initial data $(\rho_0,\textbf{u}_0,\vt_0)$. 
	\begin{itemize}
		\item The equation of continuity can be weakly formulated as
		\begin{equation}\label{W1}
		\int\limits_{0}^{\tau}\int\limits_{\Omega}\left[\rho\pa_t\varphi+\rho\textbf{u}\cdot\nabla_x\varphi\right]dxdt=\int\limits_{\Omega}\rho(\tau,\cdot)\varphi(\tau,\cdot)dx-\int\limits_{\Omega}\rho_0(\cdot)\varphi(0,\cdot)dx
		\end{equation}
		for any $\varphi\in C_c^{\f}\left([0,T)\times\Omega,\re\right)$ and $0<\tau<T$.
		\item The weak formulation of the momentum equation is the following
		\begin{equation}\label{W2}
		\begin{array}{lll}
		\dip\int\limits_{0}^{\tau}\int\limits_{\Omega}\left[\rho\textbf{u}\cdot\pa_t\psi+\rho\textbf{u}\otimes\textbf{u}:\nabla_x\psi+p(\rho,\vt)\mbox{div}_{x}\psi\right]dxdt&=\dip\int\limits_{\Omega}\rho(\tau,\cdot)\textbf{u}(\tau,\cdot)\cdot\psi(\tau,\cdot)dx\\
		&\dip-\int\limits_{\Omega}\rho_0(\cdot)\textbf{u}_0(\cdot)\cdot\psi(\tau,\cdot)dx
		\end{array}
		\end{equation}
		for any $\psi\in C_c^{\f}\left([0,T)\times\Omega,\re^N\right)$ and $0<\tau<T$.
		
		\item We say the  energy equation is satisfied weakly if the following holds
		\begin{equation}\label{W3}
		\int\limits_{0}^{\tau}\int\limits_{\Omega}\left[E\pa_t\varphi+\left(E+p(\rho,\vt)\right)\textbf{u}\cdot\nabla_x\varphi\right]dxdt=\int\limits_{\Omega}E(\tau,\cdot)\varphi(\tau,\cdot)dx-\int\limits_{\Omega}E(0,\cdot)\varphi(0,\cdot)dx
		\end{equation}
		for any $\varphi\in C_c^{\f}\left([0,T)\times\Omega,\re\right)$ and $0<\tau<T$ where $E(t,x):=\frac{1}{2}\rho|\textbf{u}|^2+\rho e(\rho,\vt)$.
		\item We say the solution is \textit{admissible} if it satisfies the weak formulation (\ref{W1})--(\ref{W3}) along with the following weak formulation of entropy inequality
		\begin{equation}\label{W4}
		\begin{array}{lll}
		\dip\int\limits_{0}^{\tau}\int\limits_{\Omega}\left[\rho \Psi(s(\rho,\vt))\pa_t\varphi+\rho \dip\Psi(s(\rho,\vt))\textbf{u}\cdot\nabla_x\varphi\right]dxdt
		&\leq&\dip\int\limits_{\Omega}\rho(\tau,\cdot)\Psi(s(\rho(\tau,\cdot),\vt(\tau,\cdot)))\varphi(\tau,\cdot)dx\\
		&-&\dip\int\limits_{\Omega}\rho_0(\cdot)\Psi(s(\rho_0(\cdot),\vt_0(\cdot)))\varphi(0,\cdot)dx
		\end{array}
		\end{equation}
		for any $0\leq\varphi\in C_c^{\f}\left([0,T)\times\Omega,\re\right),\Psi\in BC(\re),\Psi^{\p}\geq0$  and $0<\tau<T$. The function space $BC(\re)$ denotes the space of bounded continuous functions on $\re$.
	\end{itemize}  
	It is important to mention that the well--posedness of the Euler system (\ref{I1})--(\ref{I3}) is still largely unavailable. In his seminal work \cite{Diperna_measure-valued}, DiPerna gave the concept of measure valued solution to gas dynamics. That is to describe the solution in terms of a sequence of Young measures. B\v{r}ezina and Feireisl \cite{Brezina} introduced the notion of \textit{dissipative measure valued solution} to the complete Euler system. A detailed definition can be found in subsection \ref{defn}. Note that weak solutions are included in this category.
	\par For compressible isentropic Euler system the ill--posedness results exist due to Chiodaroli et al. \cite{Chio}. They used the convex integration technique which was introduced in fluid mechanics by De Lellis and  Sz\'ekelyhidi \cite{DLS09,Delellis}.  Feireisl et al. \cite{FKKM} showed that there exist infinitely many admissible weak solutions to the complete Euler system. In all of these results, construction of infinitely many solutions holds due to the presence of shock in solution.
	 \par Besides the above mentioned non-uniqueness results, there are few works on uniqueness of solutions to system \eqref{I1}--\eqref{I3} as well. In one dimension, DiPerna \cite{Diperna_uniqueness} proved the stability of piecewise Lipschitz solutions of strictly hyperbolic system within the class of entropy solutions. Chen and Frid \cite{ChenFrid} proved the uniqueness of 1-D rarefaction solution of the complete Euler system \eqref{I1}--\eqref{I3}. The stability of 1-D Riemann problem solution to complete Euler system has been studied in \cite{ChenFridLi}. Feireisl and Kreml \cite{FeiKre} showed the uniqueness of rarefaction waves for compressible Euler system of barotropic flow in multi dimension. For the isentropic Euler system, the uniqueness of Besov solution satisfying a one-sided Lipschitz bound has been proved \cite{FGJ} by using commutator estimate. It was the first time when commutator estimate has been used in the context of weak-strong uniqueness results. The stability of rarefaction solutions of one dimensional Riemann problem in multi-D has been shown \cite{FeiVas} for the system (\ref{I1})--(\ref{I3}). B\v{r}ezina and Feireisl \cite{Brezina} proved a weak(measure valued)-strong uniqueness for full Euler system by using a relative entropy previously introduced in \cite{FeiNo} for Navier-Stokes-Fourier system. Dafermos \cite{Daf73} proposed a principle of \textit{maximal dissipation} as a uniqueness criterion for physical solution. B\v{r}ezina and Feireisl \cite{Brezina2} obtained the class of maximal solution  subject to entropy production rate for full Euler system. Therefore, the uniqueness problem for complete Euler system remains unanswered for a large class of solutions.  
	 Relative entropy method has been applied to other systems also to prove weak-strong uniqueness (see for instance,  \cite{DeDoGwGwWi} and \cite{GwGwWi}). For a good survey on weak-strong uniqueness in fluid mechanics we refer interested reader to \cite{Wiedemann}. 
	
	\par In this article, we consider \textit{dissipative solution} (see Definition \ref{Dissipative}) and a weak solution having Besov regularity $B^{\al,\f}_{q}$ with $\al>1/2$. Then, we show that these two solutions coincide if the Besov solution satisfies a one-sided Lipschitz condition. We assume the Besov regularity only for the positive time and this allows shock-free Riemann problem solutions to be included in this class of Besov solution with one-sided Lipschitz condition (\ref{Lip}). We observe that one-dimensional Riemann problem solutions in multi-D does not have uniform Lipschitz bound up to time $t=0$ but they do satisfy one-sided Lipschitz condition \eqref{Lip} with appropriate Besov regularity. Our result (see Theorem \ref{theorem1}) improves the weak(measured valued)-strong uniqueness result of B\v{r}ezina-Feireisl \cite{Brezina}. 
	In the later part of this article, we obtain a uniqueness result for Besov regularity $\al>1/3$. We prove that a weak solution satisfying one sided Lipschitz bound is unique in the class of  $B^{\al,\f}_{3}, \al>1/3$ regular weak solutions of the complete Euler system \eqref{I1}--\eqref{I3}. We also obtain a similar uniqueness result for isentropic Euler system. This is different from the uniqueness result of \cite{FGJ} as we focus on establishing stability among weak solutions with regularity $B^{\al,\f}_3, \al>1/3$.
	We use a relative entropy \cite{Brezina} suitable for complete Euler system \eqref{I1}--\eqref{I3}. Since we do not have the pointwise sense of the system (\ref{I1})--(\ref{I3}) for the Besov solution, we work with mollified version of the solution and use it as a test function. To prove the uniqueness result among $B^{\al,\f}_3,\,\al>1/3$ we need to mollify both weak solutions. Then we pass to the limit by using commutator estimates. Similar type of commutator estimate lemmas can be found in \cite{CET,FGJ,GwMiGw}.
	\par One key step of our proof relies on commutator estimate for Besov functions. In fluid mechanics, it has been previously used mostly for proving energy equality. Constantin et al. \cite{CET} proved positive result of Onsager's  conjecture by using a commutator estimate for Besov functions. Later Feireisl et al. \cite{FeGwGwWi} gave one sufficient condition on Besov regularity of solution to get the energy conservation for compressible isentropic Euler system.  Analogous results are proven for complete Euler system \cite{Drivas} and also for general conservation laws \cite{GwMiGw}. As we have mentioned earlier commutator estimate has been used in \cite{FGJ} to show uniqueness of dissipative solutions to the isentropic compressible Euler system. 
	
	
	\par Rest of the article is organized as follows. In subsection \ref{outline} we give the outline of our method. In the subsection \ref{defn} we give the complete definition of \textit{dissipative measure valued solution} and \textit{dissipative solution}. After that in subsection \ref{main-result} we state our main results and prove them in section \ref{proof1} and \ref{proof2}. Section \ref{sec:relative} is devoted for proving a relative entropy inequality for the system \eqref{I1}--\eqref{I4}.
	\subsection{Outline of the method}\label{outline}
	\begin{itemize}
		\item Our main vehicles of the proof are relative entropy and the commutator estimate. It can be seen as variant of weak-strong uniqueness. The main difference between weak-strong uniqueness and our method is that we have considered measure valued solution in place of weak solution and instead of strong solution we have considered a weak solution with certain Besov regularity.
		
		\item In the context of weak-strong uniqueness, natural choice of the test function is the strong solution. In our situation we do not have that flexibility since the regular solution is not $C^1$ rather Besov. To avoid this restriction, we mollify them and use the mollified version as test function.
		
		\item By using a suitable commutator estimate, we pass to the limit in mollifiers sequence to get back the actual Besov solution we have started with. Rest of the proof follows from Gr\"onwall's lemma.
	\end{itemize}
	\subsection{Dissipative measure-valued solutions}\label{defn}
	In this section, we are going to recall the notion of \textit{dissipative solutions} of the system \eqref{I1}--\eqref{I4} from \cite{BFH}. Similar notion of dissipative solutions in the context of isentropic compressible Euler system can be found in \cite{BFH1}.
	\par Consider the following subset of $\re^{N+2}$, $	\tQ:=\left\{[ \td,\tm,\tS ]\left| \td\geq0,\tm\in\re^N,\tS\in\re\right.\right\}$.
	Let $\tP$ be the set of all probability measures on $\tQ$. By $\me$ and $\tme$ we denote the set of positive Radon measures on $\bar{\Omega}$ and $S^{N-1}\times\bar{\Omega}$ respectively. A \textit{dissipative measure valued solution} of Euler equations (\ref{I1})--(\ref{I4}) corresponding to the initial data $[ \td_0,\tm_0,\tS_0]$  and total energy $E_0$ 
	 is consisting of the following three things
	\begin{enumerate}[(i)]
		\item a family of probability measures parametrized by $(t,x)$
		\begin{equation*}
		\tY_{t,x}:(0,\f)\times\Omega\rr\tP,\ \tY\in L^\f_{\mbox{weak}-(*)}\left((0,T)\times\Omega;\tP\right),
		\end{equation*} 
		\item two type of energy concentration defect measures namely, internal and kinetic defect measures
		\begin{equation*}
		\tDkin,\tDint\in L^\f_{\mbox{weak}-(*)}\left((0,\f);\me\right),
		\end{equation*} 
		\item a convection concentration defect measure 
		\begin{equation*}
		\tDcon\in L^{\f}_{weak-(*)} \left((0,\f);\tme\right).
		\end{equation*}
	\end{enumerate}
	For $0\leq \si<\tau<T$ the above mentioned quantities verify Euler system (\ref{I1})--(\ref{I4}) in the following sense
	\begin{itemize}
		\item the continuity equation (\ref{I1}) or mass conservation
		\begin{equation}\label{D1}
		\int\limits_{\si}^{\tau}\int\limits_{\Omega}\left[\langle\tY_{t,x}; \td\rangle\pa_t\varphi+\langle\tY_{t,x};\tm\rangle\cdot\nabla_x\varphi\right]dxdt=\left[\int\limits_\Omega\lla \tY_{t,x}; \td\rra\varphi(t,\cdot)\,dx\right]_{t=\si}^{t=\tau}
		\end{equation}
				for any $\varphi\in C_c^{\f}\left([0,T)\times\Omega,\re\right)$.
		\item Momentum conservation 
		\begin{equation}\label{D2}
		\begin{array}{lll}
		& & \dip\dom\left[\left\langle\tY_{t,x};\tm\right\rangle\cdot\pa_t\psi+\left\langle\tY_{t,x};\frac{\tm\otimes\tm}{ \td}\right\rangle:\nabla_x\psi+\langle\tY_{t,x} ;p( \td,\tS)\rangle\mbox{div}_x\psi\right]dxdt\\
		&+&\dip\int\limits_{\si}^{\tau}\left[\int\limits_\Omega\int\limits_{S^{N-1}}((\xi\otimes\xi):\nabla_x\psi) d\tDcon(t)\right]dt+(\g-1)\int\limits_{\si}^{\tau}\left[\int\limits_\Omega(\mbox{div}_x\psi) d\tDint(t)\right]dt\\
		&=&\dip\left[\int\limits_{\Omega}\lla\tY_{t,x};\tm\rra\cdot\psi(t,\cdot)dx\right]_{t=\si}^{t=\tau}
		\end{array}
		\end{equation}
				for any $\psi\in C_c^{\f}\left([0,T)\times\Omega,\re^N\right)$.
		\item Energy conservation
		\begin{equation}\label{D3}
		\int\limits_{\Omega}\left\langle\tY_{t,x};\frac{1}{2}\frac{|\tm|^2}{ \td}+c_{V} (\td)^\g \mbox{exp}\left(\frac{\tS}{c_{V} \td}\right)\right\rangle dx+\int\limits_{\bar{\Omega}}\left(d\tDkin(t)+d\tDint(t)\right)=E_0
		\end{equation} 
		for a.e. $t\geq 0$ where $E_0:=\int\limits_{\Omega}\frac{1}{2}\frac{|\tm_0|^2}{ \td_0}+c_{V} (\td_0)^\g \mbox{exp}\left(\frac{\tS_0}{c_{V} \td_0}\right)dx$.
		\item Entropy inequality
		\begin{equation}\label{D4}
		\begin{array}{llll}
		& &\dip\dom\left[\lla\tY_{t,x}; \td \Psi\left(\frac{\tS}{ \td}\right)\rra\pa_t\varphi+\lla\tY_{t,x};\Psi\left(\frac{\tS}{ \td}\right)\tm\rra\cdot\nabla_x\varphi\right]dxdt\\
		&\leq&\dip\left[\int\limits_{\Omega}\lla\tY_{t,x}; \td\Psi\left(\frac{\tS}{ \td}\right)\rra\varphi(t,\cdot)dx\right]_{t=\si}^{t=\tau}
		\end{array}\end{equation}
				for any $0\leq\varphi\in C_c^{\f}\left([0,T)\times\Omega,\re\right),\Psi\in BC(\re),\Psi^{\p}\geq0$.
	\end{itemize}

	\begin{definition}[\cite{BFH}]\label{DMVS}
		A dissipative measure valued (DMV) solution of the system (\ref{I1})--(\ref{I4}) corresponding to the initial data $[ \td_0,\tm_0,\tS_0]$ and the initial energy $E_0$, is a parametrized family of probability measures
		\begin{equation*}
		\tY_{t,x}:(0,\f)\times\Omega\rr\tP,\ \tY\in L^\f_{\mbox{weak}-(*)}\left((0,T)\times\Omega;\tP\right),
		\end{equation*}
		along with the energy concentration defect measures 
		\begin{equation*}
		\tDkin,\tDint\in L^\f_{\mbox{weak}-(*)}\left((0,T)\times\Omega;\me\right)
		\end{equation*}
		and the convection concentration defect measure 
		\begin{equation*}
		\tDcon\in L^\f_{\mbox{weak}-(*)}\left((0,T)\times\Omega;\tme\right),\  \frac{1}{2}\int\limits_{S^{N-1}}d\tDcon=\tDkin
		\end{equation*}
		satisfying (\ref{D1}),(\ref{D2}),(\ref{D3}) and (\ref{D4}).
	\end{definition}
	\begin{definition}\label{Dissipative}
		The tuple $\left([\tds,\tms,\tSs],E_0\right)$ with
		\begin{equation}
		\begin{array}{lll}
		\tds\in C_{\mbox{weak,loc}}\left([0,\f); L^{\g}(\Omega)\right), \tms\in C_{\mbox{weak,loc}}\left( [0,\f);L^{\frac{2\g}{\g+1}}(\Omega,\re^N)\right),\\
		\tSs\in L^\f\left((0,\f); L^\g(\Omega)\right)\cap BV_{\mbox{weak,loc}}\left([0,\f);W^{-l,2}(\Omega)\right),l>\frac{N}{2}+1,
		\end{array}
		\end{equation}
		is called a \textit{dissipative solution} of the system (\ref{I1})--(\ref{I4}) with the initial data
		\begin{equation*}
		\left([\tds_0,\tms_0,\tSs_0],E_0\right)\in L^{\g}(\Omega)\times L^{\frac{2\g}{\g+1}}(\Omega,\re^N)\times L^{\g}(\Omega)\times[0,\f)
		\end{equation*}
		if there exists a dissipative measure valued (DMV) solution $\tY_{t,x}$ as in definition \ref{DMVS} such that the following holds
		\begin{equation}
		\tds(t,x)=\lla\tY_{t,x};\td\rra,\ \tms(t,x)=\lla\tY_{t,x};\tm\rra,\ \tSs(t,x)=\lla\tY_{t,x};\tS\rra.
		\end{equation}
	\end{definition}
  
	\subsection{Main result}\label{main-result}

	\begin{theorem}\label{theorem1}
	 Let $p,e,s$ be related by (\ref{Gibbs}), (\ref{CES}) and (\ref{Boyle}). Let $[\rho,\tms,\tSs]$ be a dissipative solution to the complete Euler system (\ref{I1})--(\ref{I4}) according to Definition \ref{Dissipative} with the initial data $[\rho_0,\tms_0,\tSs_0]$. Let the trio $[\tr,\mathfrak{u},\tT]$ be a weak solution to the system (\ref{I1})--(\ref{I3}) with the initial data $[\rho_0,\mathfrak{u}_0,\tT_0]$ such that $\tms_0=\rho_0\mathfrak{u}_0$ and $\tSs_0=\rho_0s(\rho_0,\tT_0)$ hold. Moreover, we assume that the weak solution $[\tr,\mathfrak{u},\tT]$ satisfying the following properties:
		\begin{enumerate}[(a)]
			\item\label{a} There exist $r,R,C>0$ such that
			\begin{equation*}
			0<r<\tr<R, |\mathfrak{u}|\leq C \mbox{ for a.e. }(t,x)\in(0,T)\times\Omega.
			\end{equation*}
			\item \label{b}  The temperature $\tT\in W^{1,\f}((\de,T)\times\Omega)\cap C\left([0,T],L^1(\Omega,\re)\right)$. The density $\mathfrak{r}\in C\left([0,T],L^1(\Omega)\right)$ and the velocity $\mathfrak{u}=(\mathfrak{u}^1,\cdots,\mathfrak{u}^N)\in C\left([0,T],L^1(\Omega,\re^N)\right)$ satisfy the following regularity assumption  
			\begin{equation}\label{alpha-beta}
			\begin{array}{lll}
			\tr\in B^{\al,\f}_q((\de,T)\times\Omega)
			 \mbox{ and }\mathfrak{u}\in B^{\B,\f}_q((\de,T)\times\Omega,\R^N)\mbox{ for each }\de>0
			\end{array}
			\end{equation}
			with $\B>\max\{\al,1-\al,1/2\},\ q\geq\frac{4\g}{\g-1}$.
			\item\label{c} 
			There exists $\mathcal{C}\in L^1((0,T),[0,\f))$ such that velocity $\mathfrak{u}$ satisfies the following inequality
			\begin{equation} \label{Lip}
			\int\limits_{\Omega}{ \left[ - \xi \cdot \mathfrak{u}(\tau, \cdot) (\xi \cdot \nabla_x) \varphi  + \mathcal{C}(\tau) |\xi|^2 \varphi \right] } \geq 0 \mbox{ for all }\xi \in \re^N\mbox{ and }\varphi \in C^\f(\Omega),\varphi \geq 0.
			\end{equation}
		\end{enumerate}
		Then the following holds	
		\begin{equation*}
		\rho\equiv\tr,\ \tms\equiv\tr\mathfrak{u}\mbox{ and }\tSs\equiv \mathfrak{r}s(\mathfrak{r},\tT)\mbox{ for a.e. }(t,x)\in(0,T)\times\Omega.
		\end{equation*} 
		
	\end{theorem}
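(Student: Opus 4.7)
The plan is a weak-strong uniqueness argument via the relative entropy method of B\v{r}ezina-Feireisl, adapted to the setting in which the ``strong'' comparison solution is only Besov regular. The central object is the relative entropy
\begin{equation*}
\mathcal{E}(\tau) := \int_\Omega \left\langle \tY_{\tau, x};\, \tfrac{1}{2}\tfrac{|\tm - \td \mv|^2}{\td} + H(\td, \tS) - \partial_\rho H(\mr, \mr s(\mr, \mT))(\td - \mr) - \partial_S H(\mr, \mr s(\mr, \mT))(\tS - \mr s(\mr, \mT)) - H(\mr, \mr s(\mr, \mT))\right\rangle dx,
\end{equation*}
where $H(\rho, S) = c_V \rho^\gamma \exp(S/(c_V \rho))$ is strictly convex on $\{\rho > 0\}$ since $\gamma > 1$, so that together with the uniform bounds in hypothesis \textup{(\ref{a})}, $\mathcal{E}(\tau)$ controls, up to multiplicative constants, the quadratic deviation $\int_\Omega \langle \tY; |\td - \mr|^2 + |\tS - \mr s(\mr, \mT)|^2 + |\tm - \td \mv|^2/\td\rangle dx$. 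Because $(\tr, \mathfrak{u}, \tT)$ is only Besov regular and cannot be inserted into the test slots of the DMV identities (\ref{D1})--(\ref{D4}) directly, I first mollify in space to obtain smooth surrogates $(\mr, \mv, \mT)$ and plan to pass $\eps \to 0$ at the end.

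Next, testing (\ref{D1}) with $\partial_\rho H(\mr, \mr s(\mr, \mT)) - \tfrac{1}{2}|\mv|^2$, (\ref{D2}) with $\mv$, (\ref{D4}) with $-\partial_S H(\mr, \mr s(\mr, \mT))$, and combining with (\ref{D3}) and the mollified analogues of (\ref{W1})--(\ref{W3}), one arrives at an inequality of the form
\begin{equation*}
\mathcal{E}(\tau) + \int_{\bar\Omega}\left[d\tDkin(\tau) + d\tDint(\tau)\right] \leq \mathcal{E}(0) + \int_0^\tau\!\!\int_\Omega \left\langle \tY;\tfrac{(\tm - \td \mv)\otimes(\tm - \td \mv)}{\td}\right\rangle : \nabla_x \mv \, dx\, dt + R_\eps(\tau) + \mathcal{D}_{\mathrm{conv}}(\tau),
\end{equation*}
where $R_\eps(\tau)$ collects the commutator remainders $(\tr \mathfrak{u})_\eps - \mr\mv$, $(\tr\mathfrak{u}\otimes\mathfrak{u})_\eps - \mr\mv\otimes\mv$, the analogous remainders from the pressure and entropy fluxes, and $\mathcal{D}_{\mathrm{conv}}$ collects the contribution of $\tDcon$ at the test function $\mv$. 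The Constantin-E-Titi lemma for Besov functions gives $\|(fg)_\eps - f_\eps g_\eps\|_{L^q} = O(\eps^{\alpha_1 + \alpha_2})$ whenever $f \in B^{\alpha_1, \infty}_{q_1}$, $g \in B^{\alpha_2, \infty}_{q_2}$, $1/q_1 + 1/q_2 = 1/q$, and $\alpha_1 + \alpha_2 > 0$. The regularity $\B > \max\{\al, 1-\al, 1/2\}$ is chosen exactly so that both $\al + \B > 1$ (killing all density-velocity and temperature-velocity crossings) and $2\B > 1$ (killing the pure convective commutator) hold, while $q \geq 4\gamma/(\gamma - 1)$ gives the integrability to treat the $\gamma$-law pressure and the derivatives of $H$; consequently $R_\eps \to 0$ as $\eps \to 0$.

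The main obstacle, and the place where hypothesis (\ref{c}) becomes indispensable, is the surviving ``bad'' term
\begin{equation*}
\int_0^\tau\!\!\int_\Omega \left\langle \tY; \tfrac{(\tm - \td \mathfrak{u})\otimes(\tm - \td \mathfrak{u})}{\td}\right\rangle : \nabla_x \mathfrak{u}\, dx\, dt + \int_0^\tau\!\!\int_{\bar\Omega}\int_{S^{N-1}} \xi \otimes \xi : \nabla_x \mathfrak{u}\, d\tDcon(t)\, dt,
\end{equation*}
which cannot be bounded by $\|\nabla \mathfrak{u}\|_\infty\cdot (\mathcal{E} + \mathcal{D})$ because $\mathfrak{u}$ is only Besov. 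However, (\ref{Lip}) is precisely a weak formulation of the pointwise inequality $\xi^i \xi^j \partial_j \mathfrak{u}^i \geq -\mathcal{C}(t)|\xi|^2$ for every $\xi \in \re^N$; decomposing the positive semidefinite matrix field $\langle \tY; (\tm - \td \mathfrak{u})\otimes(\tm - \td \mathfrak{u})/\td\rangle(x)$ into its rank-one eigen-components and applying (\ref{Lip}) to each, together with the structural identity $\tfrac{1}{2}\int_{S^{N-1}} d\tDcon = d\tDkin$ for the defect piece, dominates the display above by $\mathcal{C}(t)[\mathcal{E}(t) + \mathcal{D}(t)]$, where $\mathcal{D}(t) := \int_{\bar\Omega}(d\tDkin + d\tDint)$. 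Coupling this with the energy identity (\ref{D3}) and Gronwall's lemma, together with the equality of initial data giving $\mathcal{E}(0) = \mathcal{D}(0) = 0$, forces $\mathcal{E} \equiv 0$ and $\tDkin \equiv \tDint \equiv 0$. The strict convexity of $H$ then collapses $\tY_{t,x}$ to the Dirac mass concentrated at $(\tr(t,x),\, \tr(t,x)\mathfrak{u}(t,x),\, \tr(t,x) s(\tr(t,x), \tT(t,x)))$, yielding the claimed identifications of $\rho$, $\tms$, and $\tSs$.
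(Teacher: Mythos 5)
Your proposal is correct and follows essentially the same route as the paper: mollify the Besov comparison solution, insert the mollified fields into the measure-valued relative-entropy inequality, control the commutator remainders via Besov estimates using exactly the conditions $\alpha+\beta>1$ and $\beta>1/2$, absorb the quadratic convective term and the concentration-defect contribution using the one-sided Lipschitz condition, and close with Gronwall. The only cosmetic difference is that you phrase the relative entropy as a Bregman divergence of the convex energy $H(\rho,S)$ in conservative variables rather than through the ballistic free energy $H_{\mathfrak{T}}(\mathfrak{r},\theta)$, which yields the same functional.
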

	 Since dissipative solutions are more general concept and the weak solutions are already included in this category the similar result is true for admissible solutions also.
	\begin{corollary}\label{corollary1}
		Let $[\rho,\tms,\Theta]$ be an admissible  solution to the complete Euler system (\ref{I1})--(\ref{I4}) in the sense of (\ref{W1})--(\ref{W4}) with initial data $[\rho_0,\tms_0,\Theta_0]$. Let the trio $[\tr,\mathfrak{u},\tT]$ be a weak solution to the system (\ref{I1})--(\ref{I3}) with the initial data $[\rho_0,\mathfrak{u}_0,\Theta_0]$ such that $\tms_0=\rho_0\mathfrak{u}_0$. Assume that $[\tr,\mathfrak{u},\tT]$ satisfies the assumptions \ref{a},\ref{b} and \ref{c} as in Theorem \ref{theorem1}.	Then we have
		\begin{equation*}
		\rho\equiv\tr,\ \tms\equiv\tr\mathfrak{u}\mbox{ and }\Theta\equiv\tT\mbox{ for a.e. }(t,x)\in(0,T)\times\Omega.
		\end{equation*} 
	\end{corollary}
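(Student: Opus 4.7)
The plan is to reduce Corollary 1.3 to Theorem 1.1 by exhibiting the admissible solution $[\rho,\tms,\Theta]$ as a dissipative solution in the sense of Definition 1.2. To this end I would set $\tSs(t,x):=\rho(t,x)\,s(\rho(t,x),\Theta(t,x))$ and introduce the parametrized family of probability measures
\begin{equation*}
\tY_{t,x}:=\delta_{[\rho(t,x),\,\tms(t,x),\,\tSs(t,x)]},
\end{equation*}
together with trivial concentration defects $\tDkin\equiv\tDint\equiv 0$ and $\tDcon\equiv 0$ (the compatibility $\tfrac{1}{2}\int_{S^{N-1}}d\tDcon=\tDkin$ is then automatic).

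The first substantive step is to check that every quantity appearing in the DMV formulation (D1)--(D4) evaluated against this Dirac mass matches the corresponding physical quantity of the admissible solution. Using $s(\rho,\vt)=\log(\vt^{c_{V}}/\rho)$, the constitutive laws \eqref{Gibbs},\eqref{CES},\eqref{Boyle} and the identity $c_{V}(\gamma-1)=1$, a direct computation yields
\begin{equation*}
c_{V}\,\rho^{\gamma}\exp\!\left(\frac{\tSs}{c_{V}\rho}\right)=\rho\,e(\rho,\Theta),\qquad \rho^{\gamma}\exp\!\left(\frac{\tSs}{c_{V}\rho}\right)=\rho\,\Theta=p(\rho,\Theta),
\end{equation*}
so $\lla\tY_{t,x};\td\rra=\rho$, $\lla\tY_{t,x};\tm\rra=\tms$, $\lla\tY_{t,x};\tm\otimes\tm/\td\rra=\tms\otimes\tms/\rho$, $\lla\tY_{t,x};p(\td,\tS)\rra=p(\rho,\Theta)$, and $\lla\tY_{t,x};\td\,\Psi(\tS/\td)\rra=\rho\,\Psi(s(\rho,\Theta))$; likewise the kinetic-plus-internal energy density in (D3) coincides with the physical energy density $E=\tfrac{1}{2}|\tms|^2/\rho+\rho e(\rho,\Theta)$.

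Next I would verify (D1)--(D4) term by term. With the defect measures set to zero, (D1), (D2) and (D4) reduce literally to the weak formulations (W1), (W2) and (W4) that the admissible solution satisfies. For (D3), testing (W3) with the constant test function $\varphi\equiv 1$, admissible on the torus $\Omega=([-1,1]_{\{\pm 1\}})^N$, yields conservation of the total energy $\int_\Omega E(t,\cdot)\,dx=E_0$ for a.e.\ $t\in(0,T)$, which is precisely (D3) in the absence of defects. Hence $[\rho,\tms,\tSs]$ is a dissipative solution with initial data $[\rho_0,\tms_0,\rho_0 s(\rho_0,\Theta_0)]$ and initial energy $E_0$.

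Finally, since the weak solution $[\tr,\mathfrak{u},\tT]$ shares the initial density and velocity ($\tms_0=\rho_0\mathfrak{u}_0$) and satisfies $\Theta_0=\tT_0$, we have $\tSs_0=\rho_0 s(\rho_0,\Theta_0)=\rho_0 s(\rho_0,\tT_0)$, so all hypotheses of Theorem \ref{theorem1} are met. Applying that theorem gives $\rho\equiv\tr$, $\tms\equiv\tr\mathfrak{u}$ and $\rho\,s(\rho,\Theta)=\tSs\equiv\tr\,s(\tr,\tT)$; since $\vt\mapsto\log(\vt^{c_{V}}/\rho)$ is strictly increasing, the identity $\rho=\tr>0$ forces $\Theta\equiv\tT$. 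The only technical point is the reduction of the integrated (D3) to the conservation form of (W3), and here the periodic setting makes the constant test function immediately available, so the real content of the corollary lies entirely in Theorem \ref{theorem1}; no new estimate is needed.
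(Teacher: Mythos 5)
Your proposal is correct and follows exactly the route the paper intends: the paper dispatches this corollary with the one-line remark that admissible weak solutions are already contained in the class of dissipative solutions, which is precisely your construction of the Dirac Young measure $\tY_{t,x}=\delta_{[\rho,\tms,\rho s(\rho,\Theta)]}$ with vanishing defect measures, after which Theorem \ref{theorem1} applies. Your explicit verification of (\ref{D1})--(\ref{D4}) from (\ref{W1})--(\ref{W4}), and the final recovery of $\Theta\equiv\tT$ from $\tSs\equiv\tr s(\tr,\tT)$ via strict monotonicity of $s$ in the temperature, simply fills in details the paper leaves implicit.
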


Suppose two weak solutions coming from same initial condition satisfy regularity assumption \ref{b}. Then it is clear from Corollary \ref{corollary1} that they must coincide if one of them satisfies the inequality \eqref{Lip}. Therefore, the condition \eqref{Lip} establishes uniqueness among the class of weak solution verifying regularity \ref{b}. In the next theorem, we prove that the one sided Lipschitz condition gives uniqueness in a larger space than \eqref{alpha-beta}.
	
	\begin{theorem}\label{theorem2}
	Let $p,e,s$ be related by (\ref{Gibbs}), (\ref{CES}) and (\ref{Boyle}). Let $(\rho,\textbf{u},\vartheta)$ and $(r,\textbf{v},\theta)$ be two weak solutions to the complete Euler system (\ref{I1})--(\ref{I3}) with the initial data $(\rho_0,\textbf{v}_0,\vartheta_0)$. Suppose $(\rho,\textbf{u},\vartheta)$ satisfies \eqref{I4} in the sense of distribution. Moreover, we assume the following properties:
	\begin{enumerate}[(A)]
		\item \label{a1}  Temperatures $\vartheta,\theta\in W^{1,\f}((\de,T)\times\Omega)\cap C\left([0,T],L^1(\Omega,\re)\right)$. Densities ${r},\rho\in C\left([0,T],L^1(\Omega)\right)$ and velocities $\textbf{u},\textbf{v}\in C\left([0,T],L^1(\Omega,\re^N)\right)$ satisfy the following regularity assumption  
		\begin{equation}\label{alpha-beta1}
		(\rho,\textbf{u}),(r,\textbf{v})\in B^{\B,\f}_3((\de,T)\times\Omega,\R^{1+N})\mbox{ for each }\de>0\mbox{ with }\B>{1}/{3}.
		\end{equation}

    	\item\label{b1} Assumption \ref{a} in Theorem \ref{theorem1} holds for $r$ and $\textbf{v}$ with some $\underline{r},\overline{r},C>0$. We also assume that $\textbf{v}$ satisfies \eqref{Lip} for some $\mathcal{C}_1\in L^1((0,T),[0,\f))$.  
	\end{enumerate}
	Then the following holds,	
	\begin{equation*}
	\rho\equiv r,\ \textbf{u}\equiv\textbf{v}\mbox{ and }\vartheta\equiv \theta\mbox{ for a.e. }(t,x)\in(0,T)\times\Omega.
	\end{equation*} 
	
\end{theorem}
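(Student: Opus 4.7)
The strategy is to extend the relative-entropy/commutator approach used for \cref{theorem1} (cf.\ also \cite{FGJ}) to the situation where \emph{both} solutions are merely Besov regular, by mollifying both sides of the comparison and invoking commutator estimates of Constantin--E--Titi type at the Onsager-critical exponent $\B>1/3$.

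First I would fix the B\v{r}ezina--Feireisl relative entropy
\[
\mathcal{E}(\tau)=\int_\Omega\Bigl[\tfrac12\rho|\textbf{u}-\textbf{v}|^2+H(\rho,S)-\partial_\rho H(r,S_r)(\rho-r)-\partial_S H(r,S_r)(S-S_r)-H(r,S_r)\Bigr](\tau,\cdot)\,dx,
\]
with $S=\rho s(\rho,\vt)$, $S_r=r\,s(r,\theta)$, and $H$ the strictly convex internal-energy potential in $(\rho,S)$ constructed in \cref{sec:relative}. Under the bounds in \ref{b1}, $\mathcal{E}$ controls $\int \rho|\textbf{u}-\textbf{v}|^2+|\rho-r|^2+|S-S_r|^2$. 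Next, since neither $(\rho,\textbf{u},\vt)$ nor $(r,\textbf{v},\theta)$ is admissible as a test function in \eqref{W1}--\eqref{W4}, I would convolve the equations satisfied by $(r,\textbf{v},\theta)$ with a spatial mollifier $\eta_\var$ to obtain pointwise PDEs up to commutator source terms, and symmetrically use $(r_\var,\textbf{v}_\var,\theta_\var)$ as test functions in the weak formulations obeyed by $(\rho,\textbf{u},\vt)$. A routine computation based on the Gibbs relation \eqref{Gibbs}, together with the fact that \eqref{Lip} is preserved under spatial mollification (with constant $\mathcal{C}_1\ast\eta_\var\to\mathcal{C}_1$), then produces the mollified relative-entropy inequality
\[
\mathcal{E}(\tau)\le\int_0^\tau\!\!\int_\Omega\nabla\textbf{v}_\var\!:\!\rho(\textbf{u}-\textbf{v})\otimes(\textbf{u}-\textbf{v})\,dx\,dt+\int_0^\tau K\,\mathcal{E}(t)\,dt+\mathcal{R}_\var,
\]
where $\mathcal{R}_\var$ collects commutator remainders of the form $\int[(fg)_\var-f_\var g_\var]\nabla\varphi$ and $\int[(fgh)_\var-f_\var g_\var h_\var]\nabla\varphi$ with $f,g,h\in\{\rho,r,\textbf{u},\textbf{v}\}\subset B^{\B,\f}_3$.

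The decisive step is to let $\var\to 0$. By the Besov commutator lemma (cf.\ \cite{CET,Drivas,FeGwGwWi,GwMiGw}), triple products with $\B>1/3$ satisfy $\|(fgh)_\var-f_\var g_\var h_\var\|_{L^1}=O(\var^{3\B-1})\to 0$, while the bilinear versions give $O(\var^{2\B-1})\to 0$ in $L^{3/2}$; the temperature-dependent residuals are automatically of order $\var$ because $\vt,\theta\in W^{1,\f}$, so $\mathcal{R}_\var\to 0$. The first right-hand term is controlled by \eqref{Lip} applied to $\textbf{v}_\var$, giving
\[
\int_\Omega\nabla\textbf{v}_\var\!:\!\rho(\textbf{u}-\textbf{v})\otimes(\textbf{u}-\textbf{v})\,dx\le \mathcal{C}_1(t)\int_\Omega\rho|\textbf{u}-\textbf{v}|^2\,dx\le 2\mathcal{C}_1(t)\mathcal{E}(t),
\]
exactly as in the proof of \cref{theorem1}. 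Since the initial data coincide, $\mathcal{E}(0)=0$, and Gronwall's lemma applied to $\mathcal{E}(\tau)\le\int_0^\tau(K+2\mathcal{C}_1(t))\mathcal{E}(t)\,dt$ forces $\mathcal{E}\equiv 0$, which in turn yields $\rho=r$, $\textbf{u}=\textbf{v}$ and $\vt=\theta$ almost everywhere.

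The main obstacle is the vanishing of the cubic commutator arising from the convective momentum flux $\rho\textbf{u}\otimes\textbf{u}$: this is the Onsager-critical nonlinearity, and the hypothesis $\B>1/3$ with $L^3$-Besov scaling is used at capacity. Because here \emph{both} solutions are rough (unlike in \cref{theorem1}, where one could remain essentially un-mollified), every convective or entropy-flux nonlinearity in the relative-entropy identity generates a symmetric pair of commutator remainders, and closing the estimate requires the cubic scaling to be respected simultaneously on both sides of the comparison. This is where the uniform lower bound $0<\underline{r}<r$ and the $L^\f$ bounds on $\textbf{v},\theta$ in \ref{b1} are indispensable, since they prevent the pressure and entropy-flux contributions from degrading the $L^3$-based scaling on which the commutator estimate relies.
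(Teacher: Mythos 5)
Your overall strategy -- relative entropy, mollification, Onsager-critical commutator estimates at $\B>1/3$, the one-sided Lipschitz bound \eqref{Lip} for the quadratic term, and Gronwall -- is exactly the route the paper takes. But there is one concrete flaw in how you set up the mollification, and it sits precisely at the point that separates Theorem \ref{theorem2} ($\B>1/3$) from Theorem \ref{theorem1} ($\B>1/2$). You propose to mollify only the equations satisfied by $(r,\textbf{v},\theta)$ and to use $(r_\var,\textbf{v}_\var,\theta_\var)$ as test functions in the weak formulation obeyed by the \emph{un-mollified} $(\rho,\textbf{u},\vartheta)$. With that scheme the commutator source terms -- e.g.\ $\mathrm{div}_x\bigl(r_\var\textbf{v}_\var\otimes\textbf{v}_\var-(r\textbf{v}\otimes\textbf{v})_\var\bigr)$ coming from substituting $\pa_t\textbf{v}_\var$ -- end up paired against $\rho(\textbf{v}_\var-\textbf{u})/r_\var$, which is only Besov. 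You cannot integrate by parts to move the divergence off the commutator, so you are forced to estimate the commutator \emph{together with its derivative}, which by the scaling \eqref{Besov_estimate1}--\eqref{Besov_estimate3} costs $O(\var^{2\B-1})$, not $O(\var^{3\B-1})$. That is exactly the obstruction that limits Theorem \ref{theorem1} to $\B>1/2$; your claimed cubic rate does not materialize from the construction you describe, even though you correctly name cubic commutators $(fgh)_\var-f_\var g_\var h_\var$ as the objects that should appear.

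The fix, which is what the paper does in Section \ref{proof2}, is to mollify \emph{both} systems (producing remainders $R_1^\e,\dots,R_5^\e$) and to run the whole relative-entropy computation between the two mollified fields, i.e.\ for $\mathfrak{E}(\rho_\e,\textbf{u}_\e,\vartheta_\e\,|\,r_\e,\textbf{v}_\e,\theta_\e)$. Then every commutator remainder is tested against a mollified quantity such as $\textbf{v}_\e$ or $\rho_\e(\textbf{v}_\e-\textbf{u}_\e)/r_\e$, an integration by parts is legitimate, and the estimate splits as $\|(fg)_\e-f_\e g_\e\|_{L^{3/2}}\cdot\|\nabla_x\textbf{v}_\e\|_{L^{3}}=O(\e^{2\B})\cdot O(\e^{\B-1})=O(\e^{3\B-1})\to 0$ for $\B>1/3$, using \cite[Lemma 3.1]{GwMiGw}. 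One then recovers $\mathfrak{E}(\rho,\textbf{u},\vartheta\,|\,r,\textbf{v},\theta)$ in the limit $\e\to0$ by the a.e.\ convergence of the mollifications, and finally sends $\si\to0$ using the $C([0,T],L^1)$ continuity, since the Besov bounds in \ref{a1} are only available on $(\de,T)\times\Omega$. With that correction your argument matches the paper's proof; the remaining ingredients (coercivity of the relative entropy under the bounds in \ref{b1}, stability of \eqref{Lip} under mollification, Gronwall from coinciding initial data) are as you state them.
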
	

	 \begin{remark}
It is important to note that the solutions consisting one dimensional rarefaction waves are included in class of Besov solutions (\ref{alpha-beta}) with one sided Lipschitz condition (\ref{Lip}). We can consider a one dimensional Riemann data $u_0(x_1)$ for system (\ref{I1})--(\ref{I3}) such that only rarefaction appears in the solution. Then we extend that data in multi dimension by keeping it invariant in other variables. For more on Riemann problem solutions one can check \cite{CH}.
\end{remark}
    \begin{remark}
     Working on the domain as defined in (\ref{domain}) is really not a restriction. One can also work with non-periodic data and domain as well. Then it can be reduced to our situation by following the similar analysis as in  \cite[section 6]{FGJ}.
    \end{remark}

	\subsection{Preliminaries and notations}
	Suppose $\B\in(0,1)$, $p\geq1$ and $\mathcal{U}\subset\bar{\mathcal{U}}\subset(0,T)\times\Omega$ the Besov semi-norm is defined as follows
	\begin{equation*}
	|g|_{B^{\B,\f}_{p}(\mathcal{U})}=\sup\limits_{0\neq h\in\re^{N+1},\ \mathcal{U}+h\subset(0,T)\times\Omega}\frac{\|g(\cdot+h)-g(\cdot)\|_{L^p(\mathcal{U})}}{|h|^{\B}}.
	\end{equation*}
	For more on Besov space we refer interested reader to \cite{Triebel}. Let $\{\eta_{\e}\}_{\e>0}$ be standard mollifier sequence. Then for a function $g\in B^{\B,\f}_{p}$ we have the following estimates for $\e>0$
	\begin{align}
	\|g*\eta_{\e}-g\|_{L^{p}(\mathcal{U})}&\leq |g|_{B^{\B,\f}_{p}(\mathcal{U})}\e^{\B},\label{Besov_estimate1}\\
	\|g(\cdot+h)-g(\cdot)\|_{L^{p}(\mathcal{U})}&\leq|g|_{B^{\B,\f}_{p}(\mathcal{U})} |h|^{\B},\label{Besov_estimate2}\\
		\|\nabla(g*\eta_{\e})\|_{L^{p}(\mathcal{U})}&\leq |g|_{B^{\B,\f}_{p}(\mathcal{U})}\e^{\B-1}.\label{Besov_estimate3}
	\end{align}
	Above estimates follow from the definition and careful analysis.
	\section{Relative entropy}\label{sec:relative}
	Notion of relative entropy comes from seminal work of DiPerna \cite{Diperna_uniqueness} and Dafermor \cite{Daf79}. Here we use  the relative energy in Eulerian coordinate introduced in \cite{FeiNo}. Before we define relative energy let us first recall definition of some useful quantities. We define \textit{ballistic free energy} as follows
	\begin{equation}\label{ballistic}
	H_{\tT}(\tr,\theta):=\tr e(\tr,\theta)-\tT\tr s(\tr,\theta).
	\end{equation}
	By using Gibbs identity and \eqref{ballistic} we observe that
	\begin{equation}\label{P2}
	\begin{array}{ll}
	&\tr \pa_{\tr}H_{\tT}(\tr,\tT)=H_{\tT}(\tr,\tT)+p(\tr,\tT),\,\tr\pa_{\tr}s(\tr,\tT)=-\frac{1}{\tr}\pa_{\tT}p(\tr,\tT)\\
	\mbox{ and } &\pa_{\tT}H_{\tT}(\tr,\tT)=-\tr s(\tr,\tT).
	\end{array}
	\end{equation}
%
	For technical reason we would like to consider the following quantity which is comparable to the temperature $\vt$.
	\begin{equation}\label{theta}
	{\Theta}:=(\tds)^{\g-1}\mbox{exp}\left\{(\g-1)\frac{\tSs}{\tds}\right\} \mbox{	for $\tds\in(0,\f)$ and $\tSs\in\re$.}
	\end{equation}
	This implies $\tSs=\tds s(\tds,{\Theta})$. 
	
	Now we are ready to define the \textit{relative entropy} 
	\begin{equation}\label{rel-entropy}
	\begin{array}{lll}
	\mathfrak{E}\left(\tds,\tms,\Theta\left|\right.\tr,\mathfrak{u},\tT\right)&:=
	\frac{1}{2}\tds\left|\frac{\tms}{\rho}-\mathfrak{u}\right|^2
	+\rho e(\rho,\Theta)\\
	&-\rho\tT\Psi\left(s(\rho,\Theta)\right)-\pa_{\tr}H_{\tT}(\tr,\tT)\left(\rho-\tr\right)-H_{\tT}(\tr,\tT).
	\end{array}
	\end{equation}
	Less formally, $\mathfrak{E}\left(\tds,\tms,\Theta\left|\right.\tr,\mathfrak{u},\tT\right)$ measures a distance between vectors $(\tds,\tms,\Theta)$ and $(\tr,\mathfrak{u},\tT)$. For relative energy in Lagrangian form see \cite{ChenFrid}.
	\vspace*{-0.25cm}
	\subsection{Relative entropy inequality}
	If $(\tds,\tms,\Theta)$ is a dissipative solution, then $\int\limits_{\Omega}\mathfrak{E}\left(\tds,\tms,\Theta\left|\right.\tr,\mathfrak{u},\tT\right)$ satisfies a certain estimate which plays a key role in our proof of Theorem \ref{theorem1}. More precisely, we have the following,

	\begin{proposition}\label{proposition1}
	Let $[\tds,\tms,\tSs]$ be a dissipative solution defined as in Definition \ref{Dissipative}. Let
		\begin{equation*}
	\mathfrak{r}\in C^1\left([0,T]\times\Omega\right), \mathfrak{r}>0, \mathfrak{T}\in C^1\left([0,T]\times\Omega\right), \mathfrak{u}\in C^1\left([0,T]\times\Omega,\re^N\right).
	\end{equation*}
	Then, for $0\leq \si<\tau\leq T$ we have
		\begin{align}
		& \dip\left[\int\limits_{\Omega}\left(\lla\tY_{t,x};\mathfrak{E}\left(\td,\tm,\bar{\Theta}\left|\right.
		\tr,\mathfrak{u},\tT\right)\rra +d\tDkin(t)+d\tDint(t)\right)dx\right]_{t=\si}^{t=\tau}\nonumber\\
		&\leq
		\domm\lla\tY_{t,x};\frac{\left(\tm-\td\mathfrak{u}\right)\otimes\left(\td\mathfrak{u}-\tm\right)}{\td}\rra:\nabla_x\mathfrak{u}\,dxdt\nonumber\\
		&-\domm\frac{\pa_{\tT}H_\tT}{\tr}
		\left(\lla\tY_{t,x}; \td\rra\pa_t\tT+\langle\tY_{t,x};\tm\rangle\cdot\nabla_x\tT\right)\,dxdt\nonumber\\
		&-\domm\lla\tY_{t,x}; \td \Psi\left(s(\td,\bar{\Theta})\right)\rra\pa_t\tT+\lla\tY_{t,x};\Psi\left(s(\td,\bar{\Theta})\right)\tm\rra\cdot\nabla_x\tT
		\,dxdt\nonumber\\
		&+\int\limits_{\si}^{\tau}\int\limits_{\Omega}\left[
		\left(1-\lla\tY_{t,x};\frac{\td}{\tr}\rra\right)\pa_t\left(p(\tr,\tT)\right)
		-\lla\tY_{t,x};\frac{\td\mathfrak{u}}{\tr}\rra\cdot\nabla_x\left(p(\tr,\tT)\right)\right]\,dxdt\nonumber\\
		&+\int\limits_{\si}^{\tau}\int\limits_{\Omega}\frac{1}{\tr}\left[
		\lla\tY_{t,x};\td\mathfrak{u}-\tm\rra\cdot\left(\tr\pa_t\mathfrak{u}+\tr\nabla_x\mathfrak{u}\cdot\mathfrak{u}+\nabla_x p(\tr,\tT)\right)\right]\,dxdt\nonumber\\
		&-\domm\lla\tY_{t,x} ;p( \td,\bar{\Theta})\rra\mbox{div}_x\mathfrak{u}\,dxdt
		-\mathcal{R}(\si,\tau),\label{rel-en-es}
		\end{align}
		where 
		\begin{equation}\label{R1}
		\mathcal{R}(\si,\tau):=\dip\int\limits_{\si}^{\tau}\left[\int\limits_\Omega\int\limits_{S^{N-1}}((\xi\otimes\xi):\nabla_x\mathfrak{u}) d\tDcon(t)\right]dt+(\g-1)\int\limits_{\si}^{\tau}\left[\int\limits_\Omega (\mbox{div}_x\mathfrak{u}) d\tDint(t)\right]dt.
		\end{equation}
	\end{proposition}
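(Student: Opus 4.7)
The argument is a standard relative-energy computation adapted to the DMV framework. Using the identity $\tr\pa_\tr H_\tT(\tr,\tT)=H_\tT(\tr,\tT)+p(\tr,\tT)$ from \eqref{P2}, I would first split the integrand as
\begin{equation*}
\lla\tY_{t,x};\mathfrak{E}\rra=\lla\tY;\tfrac12|\tm|^2/\td+\td e\rra-\lla\tY;\tm\rra\cdot\mathfrak{u}+\tfrac12\lla\tY;\td\rra|\mathfrak{u}|^2-\lla\tY;\td\tT\Psi(s)\rra-\pa_\tr H_\tT\lla\tY;\td\rra+p(\tr,\tT),
\end{equation*}
noting that the relation $\tS/\td=s(\td,\bar\Theta)$ extracted from \eqref{theta} identifies the entropy $\Psi(\tS/\td)$ appearing in \eqref{D4} with the $\Psi(s(\td,\bar\Theta))$ above. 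Each of the five non-constant pieces is then paired with one of the DMV relations.

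The first bracket is independent of $t$ modulo $d\tDkin+d\tDint$ by \eqref{D3}. The second piece is treated by \eqref{D2} with $\psi=\mathfrak{u}$, which also yields the contribution $-\lla\tY;p(\td,\bar\Theta)\rra\mbox{div}_x\mathfrak{u}$ and the concentration remainder $-\mathcal{R}(\si,\tau)$. The third piece is handled by \eqref{D1} with $\varphi=\tfrac12|\mathfrak{u}|^2$, the fourth by the entropy inequality \eqref{D4} with $\varphi=\tT$ (where the overall $\leq$ enters), and the penultimate piece by \eqref{D1} with $\varphi=-\pa_\tr H_\tT(\tr,\tT)$; the boundary term $p(\tr,\tT)$ between $\si$ and $\tau$ is simply $\int_\si^\tau\int_\Omega\pa_tp(\tr,\tT)$.

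Assembling these pieces, the $\pa_t\mathfrak{u}$ contributions from \eqref{D1} and \eqref{D2} combine into $\lla\tY;\td\mathfrak{u}-\tm\rra\cdot\pa_t\mathfrak{u}$, which is part of the last line of \eqref{rel-en-es}. The convective terms simplify via the algebraic decomposition
\begin{equation*}
\lla\tY;(\td\mathfrak{u}-\tm)\otimes\tm/\td\rra:\nabla_x\mathfrak{u}=\lla\tY;(\tm-\td\mathfrak{u})\otimes(\td\mathfrak{u}-\tm)/\td\rra:\nabla_x\mathfrak{u}+\lla\tY;\td\mathfrak{u}-\tm\rra\cdot(\nabla_x\mathfrak{u}\cdot\mathfrak{u}),
\end{equation*}
giving the leading quadratic relative kinetic term of \eqref{rel-en-es} and another piece of the last line. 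For the contributions $-\lla\tY;\td\rra\pa_t(\pa_\tr H_\tT)-\lla\tY;\tm\rra\cdot\nabla_x(\pa_\tr H_\tT)$, the Gibbs-derived identities $\pa_\tr^2 H_\tT=\pa_\tr p/\tr$ and $\pa_\tr\pa_\tT H_\tT=-s+\pa_\tT p/\tr$, combined with $\pa_\tT H_\tT=-\tr s$ from \eqref{P2}, recast them as $-\lla\tY;\td/\tr\rra\pa_tp-\lla\tY;\tm/\tr\rra\cdot\nabla_xp-\tfrac{\pa_\tT H_\tT}{\tr}\bigl(\lla\tY;\td\rra\pa_t\tT+\lla\tY;\tm\rra\cdot\nabla_x\tT\bigr)$; splitting $\tm=\td\mathfrak{u}+(\tm-\td\mathfrak{u})$ in the $\nabla_xp$ term and merging with $\int\pa_tp(\tr,\tT)$ produces exactly the $(1-\lla\tY;\td/\tr\rra)\pa_tp-\lla\tY;\td\mathfrak{u}/\tr\rra\cdot\nabla_xp$ line and completes the pressure portion of the last line.

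The principal obstacle is the careful bookkeeping required so that all middle terms generated by the splittings $\tm=\td\mathfrak{u}+(\tm-\td\mathfrak{u})$ and by the quadratic expansions cancel exactly; this happens only when the Gibbs identity \eqref{Gibbs} and its consequences \eqref{P2} are applied in the correct order. A minor technical point is that \eqref{D1}--\eqref{D4} are stated for $C_c^\f([0,T)\times\Omega)$ test functions whereas $(\tr,\mathfrak{u},\tT)$ are only $C^1$; since $\Omega$ is a torus, this is remedied by a time cutoff and passage to the limit, which recovers the boundary contributions at $t=\si,\tau$ already built into the DMV relations.
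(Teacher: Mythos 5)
Your proposal is correct and follows essentially the same route as the paper: the same test functions ($\psi=\mathfrak{u}$ in the momentum relation, $\varphi=\tfrac12|\mathfrak{u}|^2$ and $\varphi=\pa_\tr H_\tT$ in the continuity relation, $\varphi=\tT$ in the entropy inequality), the same use of the energy balance \eqref{D3} and of the identities \eqref{P2} derived from Gibbs' relation, and the same final splitting $\tm=\td\mathfrak{u}+(\tm-\td\mathfrak{u})$ to reach \eqref{rel-en-es}. Your closing remark on approximating the $C^1$ fields by admissible test functions is a small technical point the paper leaves implicit.
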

\begin{proof}
		We split the proof into following steps.
		\begin{enumerate}
			\item Set $\varphi=\frac{1}{2}|\mathfrak{u}|^2$ in continuity equation (\ref{D1}) and get
			\begin{equation}\label{P11}
			\left[\int\limits_\Omega\lla \tY_{t,x}; \td\rra\frac{1}{2}|\mathfrak{u}|^2dx\right]_{t=\si}^{t=\tau}=
			\int\limits_{\si}^{\tau}\int\limits_{\Omega}\left[\langle\tY_{t,x}; \td\rangle\pa_t\mathfrak{u}\cdot\mathfrak{u}+\langle\tY_{t,x};\tm\rangle\cdot\nabla_x\mathfrak{u}\cdot\mathfrak{u}\right]dxdt.	
			\end{equation}
			\item Put $\psi=\mathfrak{u}$ in momentum equation (\ref{D2}) and obtain the following
			\begin{align}
			& \dip\left[\int\limits_{\Omega}\lla\tY_{t,x};\tm\rra\cdot\mathfrak{u}(t,\cdot)\,dx\right]_{t=\si}^{t=\tau}\nonumber\\
			&= \dip\domm\left[\lla\tY_{t,x};\tm\rra\cdot\pa_t\mathfrak{u}+\lla\tY_{t,x};\frac{\tm\otimes\tm}{ \td}\rra:\nabla_x\mathfrak{u}+\lla\tY_{t,x} ;p( \td,\bar{\Theta})\rra\mbox{div}_x\mathfrak{u}\right]\,dxdt\nonumber\\
			&+\dip\int\limits_{\si}^{\tau}\left[\int\limits_\Omega\int\limits_{S^{N-1}}((\xi\otimes\xi):\nabla_x\mathfrak{u}) \,d\tDcon(t)\right]dt+(\g-1)\int\limits_{\si}^{\tau}\left[\int\limits_\Omega (\mbox{div}_x\mathfrak{u}) \,d\tDint(t)\right]\,dt.\label{P12}
			\end{align}
			\item Take $\varphi=\tT$ in entropy inequality  (\ref{D4}) we have
			
			\begin{equation}\label{P13}
			\begin{array}{llll}
			&-&\dip\left[\int\limits_{\Omega}\lla\tY_{t,x}; \td\Psi\left(s(\td,\bar{\Theta})\right)\rra\tT(t,\cdot)dx\right]_{t=\si}^{t=\tau}\\
			&\leq &-\dip\domm\left[\lla\tY_{t,x}; \td \Psi\left(s(\td,\bar{\Theta})\right)\rra\pa_t\tT+\lla\tY_{t,x};\Psi\left(s(\td,\bar{\Theta})\right)\tm\rra\cdot\nabla_x\tT\right]dxdt.\\
			\end{array}\end{equation}
			\item Next we set $\varphi=\pa_{\tr}H(\tr,\tT)$ in continuity equation we get
			\begin{equation}\label{P14}
			\begin{array}{lll}
			& &\dip\left[\int\limits_\Omega\lla \tY_{t,x}; \td\rra\pa_{\tr}H(\tr,\tT)dx\right]_{t=\si}^{t=\tau}\\
			&=&	\dip\int\limits_{\si}^{\tau}\int\limits_{\Omega}\left[\langle\tY_{t,x}; \dip\td\rangle\pa_t\left(\pa_{\tr}H(\tr,\tT)\right)+\langle\tY_{t,x};\tm\rangle\cdot\nabla_x\left(\pa_{\tr}H(\tr,\tT)\right)\right]dxdt.
			\end{array}	
			\end{equation}
			\item From the definition of ballistic free energy $H_\tT$ we have
			\begin{align}
			\dip	\left[\int\limits_{\Omega}\left[\tr\pa_\tr H_ \tT(\tr, \tT)-H_{ \tT}(\tr, \tT)\right]dx\right]_{t=\si}^{t=\tau}&=\dip \left[\int\limits_{\Omega}p(\tr, \tT)dx\right]_{t=\si}^{t=\tau}
			=\dip\int\limits_{\si}^{\tau}\int\limits_{\Omega}\pa_t p(\tr, \tT)dxdt.\label{P15}
			\end{align}
		\end{enumerate}
		From \eqref{P2} we have 
		\begin{align}
		\mathfrak{r} \pa_{\mathfrak{r}\mathfrak{r}}H_{\mathfrak{T}}(\mathfrak{r},\mathfrak{T})&=\pa_\mathfrak{r} p(\mathfrak{r},\mathfrak{T}),\nonumber\\
		\mathfrak{r} \pa_{\mathfrak{r}\mathfrak{T}}H_{\mathfrak{T}}(\mathfrak{r},\mathfrak{T})&= \pa_{\mathfrak{T}}p(\mathfrak{r},\mathfrak{T})+\pa_\mathfrak{T} H_{\mathfrak{T}}(\mathfrak{r},\mathfrak{T}). 
		\end{align}
		Keeping this in mind we club \eqref{P11}--(\ref{P15}) with energy conservation (\ref{D3}) and obtain the following inequality
		\begin{equation*}
		\begin{array}{llll}
		& &\dip\left[\int\limits_{\Omega}\lla\tY_{t,x};\mathfrak{E}\left(\td,\tm,\bar{\Theta}\left|\right. \tr,\mathfrak{u},\tT\right)\rra
		dx+\int\limits_{\bar{\Omega}}\left(d\tDkin(t)+d\tDint(t)\right)dx\right]_{t=\si}^{t=\tau}\\
		&\leq& -\dip\domm\lla\tY_{t,x} ;p( \td,\bar{\Theta})\rra(\mbox{div}_x\mathfrak{u}) dxdt
		-\dip\int\limits_{\si}^{\tau}\int\limits_{\Omega}\frac{\pa_{\tT}H_{\mathfrak{T}}}{\tr}
		\left(\lla\tY_{t,x}; \td\rra\pa_t\tT+\langle\tY_{t,x};\tm\rangle\cdot\nabla_x\tT\right)dxdt\\
		&-&\dip\domm\left[\lla\tY_{t,x}; \td \Psi\left(s(\td,\bar{\Theta})\right)\rra\pa_t\tT+\lla\tY_{t,x};\Psi\left(s(\td,\bar{\Theta})\right)\tm\rra\cdot\nabla_x\tT\right]dxdt\\
		&+ &\dip\int\limits_{\si}^{\tau}\int\limits_{\Omega}\left[
		\left(1-\lla\tY_{t,x};\frac{\td}{\tr}\rra\right)\pa_t\left(p(\tr,\tT)\right)
		-\lla\tY_{t,x};\frac{\tm}{\tr}\rra\cdot\nabla_x\left(p(\tr,\tT)\right)\right]dxdt\\
		&+&	\dip\int\limits_{\si}^{\tau}\int\limits_{\Omega}
		\lla\tY_{t,x};\td\mathfrak{u}-\tm\rra\cdot\left[\pa_t\mathfrak{u}+\nabla_x\mathfrak{u}\cdot\mathfrak{u}\right]
		+\domm\left[\lla\tY_{t,x};\frac{\left(\tm-\td\mathfrak{u}\right)\otimes\left(\td\mathfrak{u}-\tm\right)}{\td}\rra:\nabla_x\mathfrak{u}\right]dxdt
		\\
		&-&\dip\int\limits_{\si}^{\tau}\left[\int\limits_\Omega\int\limits_{S^{N-1}}((\xi\otimes\xi):\nabla_x\mathfrak{u}) d\tDcon(t)\right]dt-(\g-1)\int\limits_{\si}^{\tau}\left[\int\limits_\Omega (\mbox{div}_x\mathfrak{u}) d\tDint(t)\right]dt.
        \end{array}
		\end{equation*}
		With further modification we show the inequality (\ref{rel-en-es}) with (\ref{R1}). This completes the proof of Proposition \ref{proposition1}.
	\end{proof}
%
%
%
%
%
%
%
%
%
%

 Now we want to recall some properties of the relative entropy. Detail analysis of these properties can be found in \cite{FeiNo2} and \cite{FeiNo} where it has been done in the context of full Navier-Stokes-Fourier system. 
 Let $r,R>$ and $\underline{\theta},\bar{\theta}>0$ such that the following holds
\begin{eqnarray}
\mathfrak{r}(t,x)\in[r,R]\  \mbox{ and }\ \mathfrak{\tT}(t,x)\in[\underline{\theta},\bar{\theta}]\ \ \mbox{ for } (t,x)\in(0,T)\times\Omega.
\end{eqnarray} 
Then we have the following
\begin{equation}\label{coercive}
\mathfrak{E}\left(\tds,\tms,\Theta\left|\right.\tr,\mathfrak{u},\tT\right)\geq C\left\{
\begin{array}{lll}
\dip\frac{1}{2}\rho\left|\frac{\tms}{\tds}-\mathfrak{u}\right|^2+|\tds-\mathfrak{r}|^2+\left|\Theta-\tT\right|^2&\mbox{if }
(\rho,\Theta)\in[r,R]\times [\underline{\theta},\bar{\theta}],&\\
\dip\frac{1}{2}\rho\left|\frac{\tms}{\tds}-\mathfrak{u}\right|^2+1+|\rho s(\rho,\Theta)|+ e(\rho,\Theta) &\mbox{otherwise,}&
\end{array}\right.
\end{equation}
the above constant $C$ depends on $r,R,\underline{\theta},\bar{\theta}$.


 Before we enter into the proof of our main theorem we would like to make few remarks and convenient assumptions on the measure valued solution. We give a proof for the measure valued solutions with $\td\in[r,R]$ and $\bar{\Theta}\in[\underline{\theta},\bar{\theta}]$ and with this assumption only one case arises in the (\ref{coercive}). The general case can be done by separating both the possibilities with a suitable cut-off function (see \cite{FeiNo}). However, we feel that the main focus of the current article is to understand the role of commutator estimate to get  uniqueness for complete Euler system with  minimal assumption on the weak solution. For the similar reason we take $\Psi(s)=s$ in (\ref{rel-en-es}) and the general case can be treated by a similar method as in \cite{Brezina}.

\section{Proof of Theorem \ref{theorem1}}\label{proof1}
Let $(\tr,\mathfrak{u},\mathfrak{T})$ be weak solution satisfying assumptions \ref{a}--\ref{c} as in Theorem \ref{theorem1}. We mollify $(\tr,\mathfrak{u},\mathfrak{T})$ by Friedrich mollifier $\eta_\e$ with $\e>0$ to obtain
	\begin{equation}
	\mr:=\tr*\eta_\e,\ \mv:=\mathfrak{u}*\eta_\e\mbox{ and } \mT:=\tT*\eta_\e.
	\end{equation}
 Now we apply Proposition \ref{proposition1} with $(\mr,\mv,\mT)$ to get
	\begin{align*}
		& \dip\left[\int\limits_{\Omega}\left(\lla\tY_{t,x};\mathfrak{E}\left(\td,\tm,\bar{\Theta}\left|\right.
		\mr,\mv,\mT\right)\rra+d\tDkin(t)+d\tDint(t)\right) dx\right]_{t=\si}^{t=\tau}\\
		&\leq
		\mathcal{J}^\e_1+\mathcal{J}^\e_2+\mathcal{J}^\e_3-\mathcal{R}(\si,\tau),
	\end{align*}
	where $\mathcal{J}^\e_1,\mathcal{J}^\e_2$ are defined as follows
	\begin{align}
		\mathcal{J}^\e_1 &:=	\domm\left[\lla\tY_{t,x};\frac{\left(\tm-\td\mv\right)\otimes\left(\td\mv-\tm\right)}{\td}\rra:\nabla_x\mv\right]dxdt,\label{def:J1}\\
		\mathcal{J}^\e_2 &:=-\dip\domm\left[\lla\tY_{t,x}; \td s(\td,\bar{\Theta})\rra\pa_t\mT+\lla\tY_{t,x};s(\td,\bar{\Theta})\tm\rra\cdot\nabla_x\mT\right]dxdt\nonumber\\
		&+ \dip\int\limits_{\si}^{\tau}\int\limits_{\Omega}\left[
		\lla\tY_{t,x};1-\frac{\td}{\mr}\rra\pa_t\left(p(\mr,\mT)\right)
		-\lla\tY_{t,x};\frac{\td\mv}{\mr}\rra\cdot\nabla_x\left(p(\mr,\mT)\right)\right]dxdt\nonumber\\
		&-\dip\int\limits_{\si}^{\tau}\int\limits_{\Omega}\left[\frac{\pa_{\tT}(H_{\mT}(\mr,\mT)}{\mr}
		\left(\lla\tY_{t,x}; \td\rra\pa_t\mT+\langle\tY_{t,x};\tm\rangle\cdot\nabla_x\mT\right)+\lla\tY_{t,x} ;p( \td,\bar{\Theta})\rra\mbox{div}_x\mv\right]dxdt,\label{def:J2}
	\end{align}
	and $\mathcal{J}^\e_3,\mathcal{R}(\si,\tau)$ are defined as
	\begin{align}
	\mathcal{J}^\e_3	&:=	\dip\int\limits_{\si}^{\tau}\int\limits_{\Omega}\frac{1}{\mr}\left[
		\lla\tY_{t,x};\td\mv-\tm\rra\cdot\left(\mr\pa_t\mv+\mr\nabla_x\mv\cdot\mv+\nabla_x p(\mr,\mT)\right)\right]dxdt,\label{def:J3}\\
\mathcal{R}(\si,\tau)&:=\dip\int\limits_{\si}^{\tau}\left[\int\limits_\Omega\int\limits_{S^{N-1}}((\xi\otimes\xi):\nabla_x\mathfrak{u}_\e) d\tDcon(t)\right]dt+(\g-1)\int\limits_{\si}^{\tau}\left[\int\limits_\Omega (\mbox{div}_x\mathfrak{u}_\e) d\tDint(t)\right]dt.\label{def:R}
\end{align}
As we have mentioned before that the variables $\rho$, $\Theta$ in $[r,R]$ and $[\underline{\theta},\bar{\theta}]$ respectively. Now it is clear from mean value inequality that  
\begin{equation}\label{coer}
\mathfrak{E}\left(\td,\tm,\bar{\Theta}\left|\right.\mr,\mv,\mT\right)\geq C_2\left( \frac{1}{2}\bar{\rho}\left|\frac{\tm}{\td}-\mv\right|^2+|\td-\mr|^2+|\tS-\mathfrak{S}_\e|^2\right).
\end{equation}
The constant $C_2$ depends on $r,R,\underline{\theta},\bar{\theta}$. Proof of the inequality (\ref{coer}) could be more involved in general. For instance, if we just assume $\rho>0$ and $\tS>s_0$ then the proof may not follow easily by mean value inequality. We refer interested reader to \cite{FeiVas} for the general situation. By the assumption of one sided Lipschitz condition on $\mathfrak{u}$ and \eqref{coer} we get the following estimates of $\mathcal{J}^\e_1$ and $\mathcal{R}(\si,\tau)$,
	\begin{align}
	\mathcal{J}_1^\e&\leq\domm \mathcal{C}(t)\lla\tY_{t,x};\mathfrak{E}\left(\td,\tm,\bar{\Theta}\left|\right.
	\mr,\mv,\mT\right)\rra \,dxdt,\label{J1}\\
	-\mathcal{R}(\si,\tau)&\leq	\domm \mathcal{C}(t) \left(d\tDkin(t)+d\tDint(t)\right)\, dxdt.\label{R}
	\end{align} 
  Our next aim is to estimate the term $\mathcal{J}_2^\e$ defined as in \eqref{def:J2}. We estimate $\mathcal{J}_3^\e$ in section \ref{commutator}. 
	Before we  analyze the term $\mathcal{J}^\e_2$ we want to make a simple observation in the following claim.
	\begin{claim}\label{cl1} Let $F\in L^{\f}([0,T)\times\Omega)$. Then we have
		\begin{equation}\label{P1}
		\begin{array}{lll}
		-\dip \domm\pa_\tT p(\mr,\mT)\mbox{div}_x\mv Fdxdt
		&=&\dip\domm\mr\pa_\tT s(\mr,\mT)\left(\pa_t \mT+\nabla_x \Theta\cdot\mv\right)Fdxdt\\
		&+&\dip\mathcal{Q}_1^{\e}+\mathcal{Q}_2^{\e}+\mathcal{Q}_3^{\e}+\mathcal{Q}_4^{\e}
		\end{array}\end{equation}
		where $\mathcal{Q}_1^{\e},\mathcal{Q}_2^{\e},\mathcal{Q}_3^{\e},\mathcal{Q}_4^{\e}$ are defined as follows
		\begin{eqnarray*}
			\mathcal{Q}_1^{\e}&:=&\domm\left[\pa_t(\tr s(\tr,\tT))_\e-\pa_t(\mr s(\mr,\mT))\right]Fdxdt,\\
			\mathcal{Q}_2^{\e}&:=&\domm\left[\mbox{div}_x(\tr s(\tr,\tT)\mathfrak{u})_\e-\mbox{div}_x(\mr s(\mr,\mT)\mv)\right]Fdxdt,\\
			\mathcal{Q}_3^{\e}&:=&\domm s(\mr,\mT)\left(\mbox{div}_x(\mr\mv)-\mbox{div}_x(\tr\mathfrak{u})_\e\right)Fdxdt,	
		\end{eqnarray*}
	\begin{eqnarray*}			
			\mathcal{Q}_4^{\e}&:=&\domm\frac{1}{\mr}\pa_\tT p(\mr,\tT_\e)\left(\mbox{div}_x(\tr\mathfrak{u})_\e-\mbox{div}_x(\mr\mv)\right)Fdxdt.
		\end{eqnarray*}
	\end{claim}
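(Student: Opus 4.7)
The plan is to derive the identity as the integrated form of a pointwise algebraic identity at the smooth mollified fields $(\mr,\mv,\mT)$, and then to absorb the two residuals $\pa_t\mr + \dv(\mr\mv)$ and $\pa_t(\mr s(\mr,\mT)) + \dv(\mr s(\mr,\mT)\mv)$ (which would vanish if $(\mr,\mv,\mT)$ itself solved the Euler system) into the four commutator remainders $\mathcal{Q}_j^\e$ by using the weak continuity equation and the weak entropy equation satisfied by $(\mathfrak{r},\mathfrak{u},\mathfrak{T})$.

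I would begin by applying the Leibniz and chain rules to compute
\begin{equation*}
\pa_t(\mr s(\mr,\mT)) + \dv(\mr s(\mr,\mT)\mv) = s(\mr,\mT)\bigl[\pa_t\mr + \dv(\mr\mv)\bigr] + \mr\,\pa_{\mathfrak{r}} s\bigl[\pa_t\mr + \mv\cdot\na_x\mr\bigr] + \mr\,\pa_{\mathfrak{T}} s\bigl[\pa_t\mT + \mv\cdot\na_x\mT\bigr],
\end{equation*}
then substituting $\pa_t\mr + \mv\cdot\na_x\mr = [\pa_t\mr + \dv(\mr\mv)] - \mr\,\dv\mv$ together with the Gibbs relation $\mr^2\pa_{\mathfrak{r}} s = -\pa_{\mathfrak{T}} p(\mr,\mT)$ from \eqref{P2}. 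Rearranging produces the pointwise identity
\begin{equation*}
-\pa_{\mathfrak{T}} p(\mr,\mT)\,\dv\mv = \bigl(s + \mr\,\pa_{\mathfrak{r}} s\bigr)\bigl[\pa_t\mr + \dv(\mr\mv)\bigr] + \mr\,\pa_{\mathfrak{T}} s\bigl[\pa_t\mT + \mv\cdot\na_x\mT\bigr] - \bigl[\pa_t(\mr s(\mr,\mT)) + \dv(\mr s(\mr,\mT)\mv)\bigr].
\end{equation*}

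After multiplying by $F$ and integrating on $(\si,\tau)\times\Omega$, each of the two residual brackets contributes exactly two of the $\mathcal{Q}^\e$-terms. For the first bracket, the weak continuity equation for $(\mathfrak{r},\mathfrak{u})$ mollifies to $\pa_t\mr + \dv(\mathfrak{r}\mathfrak{u})_\e = 0$, so $\pa_t\mr + \dv(\mr\mv) = \dv(\mr\mv) - \dv(\mathfrak{r}\mathfrak{u})_\e$; splitting the coefficient via $s + \mr\,\pa_{\mathfrak{r}} s = s(\mr,\mT) - \tfrac{1}{\mr}\pa_{\mathfrak{T}} p(\mr,\mT)$ then identifies the two resulting pieces as $\mathcal{Q}_3^\e$ and $\mathcal{Q}_4^\e$ (the sign in $\mathcal{Q}_4^\e$ flips the commutator to match the definition). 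For the second bracket, one adds and subtracts the mollified entropy current $\pa_t(\mathfrak{r}\,s(\mathfrak{r},\mathfrak{T}))_\e + \dv(\mathfrak{r}\,s(\mathfrak{r},\mathfrak{T})\mathfrak{u})_\e$, which vanishes by mollifying the entropy equation for $(\mathfrak{r},\mathfrak{u},\mathfrak{T})$; the remaining difference against $\pa_t(\mr s(\mr,\mT)) + \dv(\mr s(\mr,\mT)\mv)$ is exactly $\mathcal{Q}_1^\e + \mathcal{Q}_2^\e$.

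The one subtle ingredient is the use of the entropy equation as an identity (not merely an inequality) for the weak solution $(\mathfrak{r},\mathfrak{u},\mathfrak{T})$, which is legitimate under the Besov regularity \eqref{alpha-beta} thanks to an Onsager-type conservation argument in the spirit of \cite{CET,Drivas}. Beyond this, the proof of the claim is purely algebraic at a fixed $\e>0$ — no estimates and no limits are taken here; those are deferred to the subsequent analysis, where commutator lemmas will be used to show $\mathcal{Q}_j^\e \to 0$ as $\e\to 0$ and thus close the relative-entropy argument in the proof of Theorem~\ref{theorem1}.
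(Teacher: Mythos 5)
Your derivation is correct and follows exactly the route the paper indicates for this claim (which it states but does not write out): mollify the continuity equation and the entropy equality for $[\tr,\mathfrak{u},\tT]$, use the Gibbs relations \eqref{P2} to convert $\mr^2\pa_{\tr}s$ into $-\pa_{\tT}p$, and absorb the residuals into $\mathcal{Q}_1^\e+\mathcal{Q}_2^\e$ and $\mathcal{Q}_3^\e+\mathcal{Q}_4^\e$ respectively. You also correctly flag the one genuine subtlety — that the entropy balance must be used as an equality for the Besov solution, which is exactly what the paper invokes ("the entropy equality for the trio") — so nothing is missing.
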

	This can be proved by mollifying the Euler system (\ref{I1})--(\ref{I3}) and the entropy equality for the trio $[\tr,\mathfrak{u},\tT]$. To make the presentation brief we skip the proof of Claim \ref{cl1}. 	Identities (\ref{P1}),(\ref{P2}) imply
	\begin{eqnarray*}
		\mathcal{J}^\e_2 &=&\dip\domm\left[\lla\tY_{t,x};\td s(\mr,\mT)-\td s(\td,\bar{\Theta})\rra\pa_t\mT+\lla\tY_{t,x}; s(\mr,\mT)\tm-s(\td,\bar{\Theta})\tm\rra\cdot\nabla_x\mT\right]dxdt\\
		&+ &\dip\int\limits_{\si}^{\tau}\int\limits_{\Omega}\left[
		\lla\tY_{t,x};1-\frac{\td}{\mr}\rra
		\pa_{\tr}p(\mr,\mT)\left(\pa_t\mr+\mv\cdot\nabla_x\mr\right)
		-\mv\cdot\nabla_x\left(p(\mr,\mT)\right)\right]dxdt\\
		&+ &\dip\int\limits_{\si}^{\tau}\int\limits_{\Omega}\left[
		\lla\tY_{t,x};1-\frac{\td}{\mr}\rra
		\pa_{\tT}p(\mr,\mT)\left(\pa_t\mT+\mv\cdot\nabla_x\mT\right)\right]dxdt\\
		&-&\dip\int\limits_{\si}^{\tau}\int\limits_{\Omega}\left[\lla\tY_{t,x} ;p( \td,\bar{\Theta})-\frac{(\g-1)\mT}{\mr}(\td s(\td,\bar{\Theta})-\mr s(\mr,\mT))\pa_\tT p(\mr,\mT)\rra\mbox{div}_x\mv \right]dxdt\\
		&+&\dip\domm(\g-1)\mT(\td s(\td,\bar{\Theta})-\mr s(\mr,\mT))\pa_\tT s(\mr,\mT)\left(\pa_t \mT+\nabla_x \Theta\cdot\mv\right)dxdt\\
		&+&\dip\mathcal{Q}_1^{\e}+\mathcal{Q}_2^{\e}+\mathcal{Q}_3^{\e}+\mathcal{Q}_4^{\e}.		
	\end{eqnarray*}
 We use integration by parts to get 
 \begin{equation*}
 -\domm\mv\cdot\nabla_x p(\mr,\mT)dxdt=\domm \mbox{div}_x(\mv)p(\mr,\mT)dxdt.
 \end{equation*}
 Consequently, we write $\mathcal{J}_2^\e$ as follows
 \begin{equation}
 	\mathcal{J}^\e_2=\mathcal{K}_1^\e+\mathcal{K}_2^\e+\mathcal{K}_3^\e+\mathcal{K}_4^\e+\dip\mathcal{Q}_1^{\e}+\mathcal{Q}_2^{\e}+\mathcal{Q}_3^{\e}+\mathcal{Q}_4^{\e},
 \end{equation}
where $\mathcal{K}^\e_i$'s are defined as follows
	\begin{align*}
	\mathcal{K}^\e_1 &:=\dip\domm\left[\lla\tY_{t,x};\td s(\mr,\mT)-\td s(\td,\bar{\Theta})\rra\pa_t\mT+\lla\tY_{t,x}; s(\mr,\mT)\tm-s(\td,\bar{\Theta})\tm\rra\cdot\nabla_x\mT\right]dxdt,\\
\mathcal{K}^\e_2 &:=\dip\int\limits_{\si}^{\tau}\int\limits_{\Omega}\left[
	\lla\tY_{t,x};1-\frac{\td}{\mr}\rra
	\pa_{\tr}p(\mr,\mT)\left(\mbox{div}_x(\mr\mv)-\mbox{div}_x(\tr\mathfrak{u})_\e\right)\right]dxdt,\\		
\mathcal{K}^\e_3 &:=\dip\int\limits_{\si}^{\tau}\int\limits_{\Omega}\left[
	\lla\tY_{t,x};
	\left(\td-\mr\right)\pa_{\tr}s(\mr,\mT)\rra
	\left(\pa_t\mT+\mv\cdot\nabla_x\mT\right)\right]dxdt\\
	&+ \dip\int\limits_{\si}^{\tau}\int\limits_{\Omega}\left[
	\lla\tY_{t,x};(\g-1)\mT\left(\td s(\td,\bar{\Theta})-\mr s(\mr,\mT)\right)\pa_{\tT}s(\mr,\mT)\rra
	\left(\pa_t\mT+\mv\cdot\nabla_x\mT\right)\right]dxdt,
		\end{align*}
	\begin{align*}
\mathcal{K}^\e_4 &:=-\dip\int\limits_{\si}^{\tau}\int\limits_{\Omega}\left[\lla\tY_{t,x} ;p(\td,\bar{\Theta})-(\td-\mr)\pa_{\tr}p(\mr,\mT)\rra\mbox{div}_x\mv\right]dxdt\\
	&+\dip\int\limits_{\si}^{\tau}\int\limits_{\Omega}\left[\lla\tY_{t,x};
	\frac{(\g-1)\mT}{\mr}(\td s(\td,\bar{\Theta})-\mr s(\mr,\mT))\pa_{\tT}p(\mr,\mT)
	+p(\mr,\mT)\rra\mbox{div}_x\mv\right]dxdt.
\end{align*}
	Our goal is to estimate $\mathcal{K}^\e_i$'s and $\mathcal{Q}^\e_i$'s. We estimate  $\mathcal{Q}^\e_i$ for $1\leq i\leq4$ and $\mathcal{K}_2^\e$ in section \ref{commutator}. Since $\mathfrak{T}\in W^{1,\f}((\de,T)\times\Omega)$ for all $\de>0$, by using \eqref{coer} we bound $\mathcal{K}_1^\e,\mathcal{K}_3^\e$ as follows,
	\begin{align}
	\mathcal{K}_1^\e&\leq C_1 \domm \lla\tY_{t,x};\mathfrak{E}\left(\td,\tm,\bar{\Theta}\left|\right.
	\mr,\mv,\mT\right)\rra dxdt,\label{K1}\\
	\mathcal{K}_3^\e&\leq C_3\domm \lla\tY_{t,x};\mathfrak{E}\left(\td,\tm,\bar{\Theta}\left|\right.
	\mr,\mv,\mT\right)\rra dxdt.\label{K3}
	\end{align} 
	The constants $C_1,C_3$ depend on $\|\tT\|_{W^{1,\f}((\si,\tau)\times\Omega)}$. 
	To bound $\mathcal{K}_4^\e$, we consider the following 
	\begin{eqnarray*}
& &p(\td,\bar{\Theta})-(\td-\mr)\pa_{\tr}p(\mr,\mT)-\frac{(\g-1)\mT}{\mr}(\td s(\td,\bar{\Theta})-\mr s(\mr,\mT))\pa_{\tT}p(\mr,\mT)-p(\mr,\mT)\\
&=&\tilde{p}(\td,\tS)-(\td-\mr)\pa_{\tr}\tilde{p}(\mr,\mathfrak{S}_\e)-(\tS-\mathfrak{S}_\e)\pa_{\mathfrak{S}}\tilde{p}(\mr,\mathfrak{S}_\e)-\tilde{p}(\mr,\mathfrak{S}_\e)
	\end{eqnarray*}
	where $\tS:=\td s(\td,\bar{\Theta})$, $\mathfrak{S}_\e:=\mr s(\mr,\mT)$ and $\tilde{p}:\re_+\times\re_+\rr\re$ is defined as
	\begin{equation*}
	\tilde{p}(\rho,\tSs):=p\left(\rho,\tds^{\g-1}\mbox{exp}\left\{(\g-1)\frac{\tSs}{\tds}\right\}\right).
	\end{equation*}	
	By the assumption taken on pressure $p$ it can be shown that $(\rho,\tSs)\mapsto \tilde{p}(\rho,\tSs)$ is a convex function, that is, the Hessian matrix is non-negative definite (see Lemma 3.1 on page-10 of \cite{BFH}). Hence we get
		\begin{eqnarray}\label{44}
	\mathcal{K}_4^\e&\leq& C_4\domm\mathcal{C}(t) \lla\tY_{t,x};|\td-\mr|^2+|\tS-\mathfrak{S}_\e|^2\rra dxdt.
	\end{eqnarray}
 From (\ref{44}) and (\ref{coer}) we have 
		\begin{equation}\label{K4}
		\mathcal{K}_4^\e\leq C_4\domm \mathcal{C}(t)\lla\tY_{t,x};\mathfrak{E}\left(\td,\tm,\bar{\Theta}\left|\right.
		\mr,\mv,\mT\right)\rra dxdt
		\end{equation}
	where the constant $C_4$ depends on $r,R,\underline{\theta},\bar{\theta}$ and $\|p(\cdot,\cdot)\|_{C^2}$. We proceed to the next section which is devoted to estimate $\mathcal{J}^\e_3$ and $\mathcal{K}_2^\e$. 
	\subsection{Commutator estimate}\label{commutator}
	\begin{lemma}\label{comm} 
		
		Let $\mathcal{U}\subset\R^M$ be a bounded open set. Let $\tilde{ \mathcal{U}} \subset \re^M$ be another bounded open set such that $\Ov{\mathcal{U}}\subset \tilde{\mathcal{U}}$. Suppose that $\mathfrak{F}: \tilde {\mathcal{U}} \to \re^k$ is defined as $\mathfrak{F}=(\mathfrak{f}_1,\cdots,\mathfrak{f}_k)$ such that for each $i\in\{1,\cdots,k\}$ the component $\mathfrak{f}_i$ belongs to the Besov space 
		$B^{\alpha_i, \infty}_p(\tilde{\mathcal{U}}, \re)$, $p \geq 2$. Let $\eta_\ep$ be Friedrich mollifiers such that $\mbox{ supp}[ \eta_{\e} ] \subset \{ |y| < \e \}$. Let $G\in C^2(K,\R)$ where $K$ is a convex open set containing image of $\mathfrak{F}$.
		 Then we have
		\begin{equation}
		\left\| \nabla_y G( \mathfrak{F}_\e ) - \nabla_y  G(\mathfrak{F})_\e \right\|_{L^{\frac{p}{2}} (\mathcal{U}; \re^M) }
		\leq \sum\limits_{|(\g_1,\cdots,\g_k)=\g|=2}\e^{\sum\limits_{j=1}^{k}\g_j\al_j-1}(\sup|\pa^\g G|)\left(\prod\limits_{j=1}^{k}|\mathfrak{f}_j|^{\g_j}_{B^{\al_j,\f}_p(\tilde{\mathcal{U}})}\right)
		\end{equation} 
		for $\nabla_y = (\partial_{y_1}, \dots, \partial_{y_M})$.
	\end{lemma}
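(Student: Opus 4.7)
The plan hinges on an elementary but crucial identity. Since the Friedrich mollifier $\eta_\epsilon$ commutes with differentiation, for each $l = 1,\ldots,M$,
\begin{equation*}
A_l := \partial_{y_l} G(\mathfrak{F}_\epsilon) - [\partial_{y_l} G(\mathfrak{F})]_\epsilon = \partial_{y_l}\bigl[G(\mathfrak{F}_\epsilon) - (G\circ\mathfrak{F})_\epsilon\bigr],
\end{equation*}
so the task reduces to an $L^{p/2}(\mathcal{U})$ estimate for one derivative of the scalar quantity $G(\mathfrak{F}_\epsilon) - (G\circ\mathfrak{F})_\epsilon$. I would then invoke a second-order Taylor expansion of $G$ centred at $\mathfrak{F}_\epsilon(y)$. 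Combined with $\int \eta_\epsilon(y-z)\,dz = 1$ and the vanishing-mean property $\int \eta_\epsilon(y-z)(\mathfrak{f}_i(z) - \mathfrak{f}_{i,\epsilon}(y))\,dz = 0$, the constant and linear Taylor contributions drop out, yielding the closed-form identity
\begin{equation*}
G(\mathfrak{F}_\epsilon)(y) - (G\circ\mathfrak{F})_\epsilon(y) = -\int \eta_\epsilon(y-z)\,\Phi_\epsilon(y,z)\,dz,
\end{equation*}
where $\Phi_\epsilon(y,z)$ is the integral form of the Taylor remainder, quadratic in $\mathfrak{F}(z)-\mathfrak{F}_\epsilon(y)$ and involving only $\partial^\gamma G$ with $|\gamma|=2$.

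Next I would differentiate this identity in $y$ to obtain
\begin{equation*}
A_l(y) = -\int (\partial_l\eta_\epsilon)(y-z)\,\Phi_\epsilon(y,z)\,dz - \int \eta_\epsilon(y-z)\,\partial_{y_l}\Phi_\epsilon(y,z)\,dz =: I_l(y) + II_l(y).
\end{equation*}
A direct calculation of $\partial_{y_l}\Phi_\epsilon$ produces three contributions from $-G(\mathfrak{F}_\epsilon(y))$ and from the linear Taylor term; two of them, both proportional to $\sum_i \partial_i G(\mathfrak{F}_\epsilon(y))(\partial_{y_l}\mathfrak{f}_{i,\epsilon})(y)$, cancel exactly, leaving
\begin{equation*}
\partial_{y_l}\Phi_\epsilon(y,z) = -\sum_{i,j}\partial_{ij}G(\mathfrak{F}_\epsilon(y))\,(\partial_{y_l}\mathfrak{f}_{j,\epsilon})(y)\,(\mathfrak{f}_i(z) - \mathfrak{f}_{i,\epsilon}(y)).
\end{equation*}
Only second derivatives of $G$ appear here, consistent with the hypothesis $G\in C^2(K,\re)$; this cancellation is precisely what allows the argument to proceed under minimal regularity of $G$.

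Finally, I would estimate $I_l$ and $II_l$ in $L^{p/2}(\mathcal{U})$ using the Besov estimates \eqref{Besov_estimate1}--\eqref{Besov_estimate3} combined with H\"older's and Minkowski's integral inequalities. After the change of variable $h = z-y$, so that $|h|<\epsilon$, one has the uniform bounds
\begin{equation*}
\|\mathfrak{f}_i(\cdot+h) - \mathfrak{f}_{i,\epsilon}\|_{L^p(\mathcal{U})} \leq 2\,|\mathfrak{f}_i|_{B^{\alpha_i,\infty}_p(\tilde{\mathcal{U}})}\,\epsilon^{\alpha_i}, \qquad \|\partial_{y_l}\mathfrak{f}_{j,\epsilon}\|_{L^p(\mathcal{U})} \leq |\mathfrak{f}_j|_{B^{\alpha_j,\infty}_p(\tilde{\mathcal{U}})}\,\epsilon^{\alpha_j-1}.
\end{equation*}
Because $|\gamma|=2$, H\"older's inequality with exponents $\gamma_j/p$ pairs the two factors arising from the quadratic structure into the target $L^{p/2}$ norm, producing $\|\Phi_\epsilon(\cdot,\cdot+h)\|_{L^{p/2}(\mathcal{U})}$ of order $\sum_{|\gamma|=2}(\sup|\partial^\gamma G|)\prod_j |\mathfrak{f}_j|^{\gamma_j}_{B^{\alpha_j,\infty}_p}\epsilon^{\sum_j\gamma_j\alpha_j}$. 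For $I_l$ the missing factor $\epsilon^{-1}$ comes from $\|\partial_l\eta_\epsilon\|_{L^1}\leq C\epsilon^{-1}$, whereas in $II_l$ it is already carried by the mollified derivative $\partial_{y_l}\mathfrak{f}_{j,\epsilon}$. Applying Minkowski's integral inequality in $h$ and summing over $|\gamma|=2$ then yields the claimed bound. The main obstacle in the whole argument is the algebraic cancellation in $\partial_{y_l}\Phi_\epsilon$: without it, the computation would formally demand third-order derivatives of $G$, which the hypothesis does not provide. Once this is verified, what remains is a careful accounting of the Besov scalings and H\"older exponents.
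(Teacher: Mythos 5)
Your argument is correct, and it rests on the same two pillars as the paper's proof — a second-order Taylor expansion of $G$ controlled only by $\sup|\partial^\gamma G|$ for $|\gamma|=2$, the Besov increment bounds \eqref{Besov_estimate1}--\eqref{Besov_estimate3}, H\"older to land in $L^{p/2}$, and $\|\nabla\eta_\e\|_{L^1}\lesssim\e^{-1}$ — but the decomposition is genuinely different. The paper inserts the cross term $DG(\mathfrak{F})\nabla_y\mathfrak{F}_\e$ and treats two commutators separately: the first, $[DG(\mathfrak{F}_\e)-DG(\mathfrak{F})]\nabla_y\mathfrak{F}_\e$, via the mean value theorem applied to $\partial_i G$, and the second, $DG(\mathfrak{F})\nabla_y\mathfrak{F}_\e-\nabla_y G(\mathfrak{F})_\e$, via a Taylor expansion centred at $\mathfrak{F}(x)$ integrated against $\nabla_y\eta_\e$ (using $\int\nabla_y\eta_\e=0$ to insert the zeroth-order term). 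You instead commute $\nabla_y$ with the mollifier first, obtain a single closed-form remainder identity centred at $\mathfrak{F}_\e(y)$ (using the vanishing-mean property of $\mathfrak{f}_i-\mathfrak{f}_{i,\e}$ to kill the linear term), and then differentiate; the algebraic cancellation you verify in $\partial_{y_l}\Phi_\e$ plays exactly the role that the paper's mean value theorem plays in its first piece. Your route has a small bonus you did not exploit: since $\partial_{y_l}\Phi_\e(y,z)$ depends on $z$ only through $\mathfrak{f}_i(z)-\mathfrak{f}_{i,\e}(y)$, the term $II_l=-\int\eta_\e(y-z)\,\partial_{y_l}\Phi_\e(y,z)\,dz$ vanishes identically, so only $I_l$ survives and the estimate you give for $II_l$ is superfluous. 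Two shared (and harmless) imprecisions: both proofs silently drop multiplicative constants such as $\|\nabla\eta\|_{L^1}$ and the $1/2$ from the Taylor remainder, which strictly speaking should appear as a constant $C$ in the stated inequality, and both assume the segment joining $\mathfrak{F}_\e(y)$ to $\mathfrak{F}(z)$ stays in $K$, which is where the convexity of $K$ enters.
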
 
	One can prove this lemma by using Taylor expansion of $C^2$ function and estimate of difference quotient for Besov function.  For a sake of completeness we give a sketch of the proof in the Appendix. Next we estimate $\mathcal{J}_3^\e$ and $\mathcal{K}_2^\e$ by employing Lemma \ref{comm}. To this end we mollify the continuity equation and momentum equation to get 
	\begin{align*}
		\pa_t\tr_\e+\mbox{div}_x(\tr\mathfrak{u})_\e&=0,\\
		\pa_t(\tr\mathfrak{u})_\e+\mbox{div}_x(\tr\mathfrak{u}\otimes\mathfrak{u})_\e+\nabla_xp(\tr,\tT)_\e&=0.
	\end{align*}
	Therefore we write    
	\begin{align*}
		\mathcal{J}^\e_3
		&=	\dip\int\limits_{\si}^{\tau}\int\limits_{\Omega}\frac{1}{\mr}\left[
		\lla\tY_{t,x};\td\mv-\tm\rra\cdot
		\left(\pa_t(\mr\mv)+\nabla_x\mv\cdot(\mr\mv)+\mbox{div}_x(\mr\mv)\mv+\nabla_x p(\mr,\mT)\right)\right]dxdt\\
		&-\domm \frac{1}{\mr} \lla\tY_{t,x};\td\mv-\tm\rra\cdot\mv\left(\pa_{t}\mr+\mbox{div}_x(\mr\mv)\right)dxdt\\
		&=\domm\frac{1}{\mr}\lla\tY_{t,x};\td\mv-\tm\rra\cdot\left[\left(\pa_t(\mr\mv)-\pa_t(\tr\mathfrak{u})_\e\right)+\left(\mbox{div}_x(\mr\mv\otimes\mv)-\mbox{div}_x(\tr\mathfrak{u}\otimes\mathfrak{u})\right)\right]dxdt\\
		&+\domm \frac{1}{\mr} \lla\tY_{t,x};\td\mv-\tm\rra\cdot\left[\mv\left(\mbox{div}_x(\tr\mathfrak{u})_\e-\mbox{div}_x(\mr\mv)\right)+\left(\nabla_x p(\mr,\mT)-\nabla_x p(\tr,\tT)_\e\right)\right]dxdt.
	\end{align*}
	By Lemma \ref{comm} we have
	\begin{equation}\label{J3}
	\mathcal{J}^\e_3\leq\left(\e^{2\B-1}+ \e^{\al+\B-1}\right)C(\|\tr\|_{B^{\al,\f}_{q}},\|\mathfrak{u}\|_{B^{\B,\f}_{q}}).
	\end{equation}
	Similarly, we have
	\begin{equation}\label{K2}
	\mathcal{K}^\e_2+\dip\mathcal{Q}_1^{\e}+\mathcal{Q}_2^{\e}+\mathcal{Q}_3^{\e}+\mathcal{Q}_4^{\e}
	\leq \e^{\al+\B-1}C(\|\tr\|_{B^{\al,\f}_{q}},\|\mathfrak{u}\|_{B^{\B,\f}_{q}}).
	\end{equation}
	\subsection{Passage of limit $\e\rr0$  and a Gronwall type argument}
	By clubbing (\ref{J1}), (\ref{R}), (\ref{K1}), (\ref{K2}), (\ref{K3}), (\ref{K4}) and (\ref{J3}) we have
	\begin{align*}
		\dip\left[\int\limits_{\Omega}\lla\tY_{t,x};\mathfrak{E}\left(\td,\tm,\bar{\Theta}\left|\right.
		\mr,\mv,\mT\right)\rra dx\right]_{t=\si}^{t=\tau}&\leq C\left(\e^{2\B-1}+\e^{\al+\B-1}\right)\\
		&+\domm K(t)\lla\tY_{t,x};\mathfrak{E}\left(\td,\tm,\bar{\Theta}\left|\right.
		\mr,\mv,\mT\right)\rra dxdt 
	\end{align*}
	where $C=C(\|\tr\|_{B^{\al,\f}_{q}},\|\mathfrak{u}\|_{B^{\B,\f}_{q}})$ and $K(t)=C(t)+C_1+C_3+C_4$. Because of the regularity assumption \ref{b} on $(\mathfrak{r},\mathfrak{u},\mathfrak{T})$ we are all set to pass the limit $\e\rr0$ and a sequence of $\si_k\rr0$ to conclude the Theorem \ref{theorem1}.  This completes the proof Theorem \ref{theorem1}.\qed
	\begin{remark}
	 We can also prove the similar result without assuming the caloric state equation and Boyle's law then we need to compensate by assuming convexity of the pressure $p$.
	\end{remark}

\section{Proof of Theorem \ref{theorem2}}\label{proof2}
In this section, we prove Theorem \ref{theorem2}. Proof is almost similar to Theorem \ref{theorem1}. Note that in this situation, admissible weak solution possesses Besov regularity $B^{\al,\f}_{3}$ for $\al>1/3$. We mollify the system \eqref{I1}--\eqref{I3} for both solutions. Therefore, we can not invoke Proposition \ref{proposition1} directly. We prove an analogous version of \eqref{rel-en-es} with some remainder terms which depend on mollifying parameter $\e>0$. Rest of the proof is similar to Theorem \ref{theorem1} and passage of limit $\e\rr0$ relies on a commutator estimate which is closer to commutator estimates of \cite{CET,Drivas,FeGwGwWi,GwMiGw} rather than Lemma \ref{comm}.

\begin{proof}[Proof of Theorem \ref{theorem2}]
Since $(\rho,\textbf{u},\vartheta)$ satisfies \eqref{I1}--\eqref{I4} in the sense of distribution, we mollify the system \eqref{I1}--\eqref{I2} and \eqref{I4} to get
\begin{eqnarray}
\pa_t\rho_\e+\mbox{div}_x(\rho\textbf{u})_\e&=&0,\label{eqn1}\\
\pa_t(\rho\textbf{u})_\e+\mbox{div}_x(\rho\textbf{u}\otimes\textbf{u})_\e+\nabla_xp(\rho,\vt)_\e&=&0,\label{eqn2}\\
\pa_t(\rho s(\rho,\vt))_\e+\mbox{div}_x(\rho s(\rho,\vt)\textbf{u})_\e&\geq&0.\label{eqn4}
\end{eqnarray}
We modify \eqref{eqn1}--\eqref{eqn4} to obtain
\begin{align}
\pa_t\rho_\e+\mbox{div}_x(\rho_\e\textbf{u}_\e)&=R_1^{\e},\label{mollified:cont1}\\
\pa_t(\rho_\e\textbf{u}_\e)+\mbox{div}_x(\rho_\e\textbf{u}_\e\otimes\textbf{u}_\e)+\nabla_xp(\rho_\e,\vt_\e)&=R_2^{\e},\label{mollified:mom1}\\
\pa_t(\rho_\e s(\rho_\e,\vt_\e))+\mbox{div}_x(\rho_\e s(\rho_\e,\vt_\e)\textbf{u}_\e)&\geq R_3^{\e}\label{eq:mollified:s},
\end{align}
where remainder terms $R_1^\e,R_2^\e$ and $R_3^\e$ are defined as
\begin{align}
R_1&:=\mbox{div}_x(\rho_\e\textbf{u}_\e)-\mbox{div}_x(\rho\textbf{u})_\e,\label{def:R1}\\
R_2^{\e}&:=\pa_t(\rho_\e\textbf{u}_\e)-\pa_t(\rho\textbf{u})_\e+\mbox{div}_x(\rho_\e\textbf{u}_\e\otimes\textbf{u}_\e)-\mbox{div}_x(\rho\textbf{u}\otimes\textbf{u})_\e,+\nabla_xp(\rho_\e,\vt_\e)-\nabla_xp(\rho,\vt)_\e,\label{def:R2}\\
R_3^{\e}&:=\pa_t(\rho_\e s(\rho_\e,\vt_\e))-\pa_t(\rho s(\rho,\vt))_\e+\mbox{div}_x(\rho_\e s(\rho_\e,\vt_\e)\textbf{u}_\e)-\mbox{div}_x(\rho s(\rho,\vt)\textbf{u})_\e.\label{err:R3}
\end{align}
%
%
%
Since $(\rho,\textbf{u},\vartheta)$ satisfies \eqref{I3}, by using appropriate test function (see for instance, \cite[Section 5]{Dafermos_book}) one can show
\begin{equation}\label{eq:energy}
\left[\int\limits_{\Omega}\frac{1}{2}\rho\abs{\textbf{u}}^2+\rho e(\rho,\vartheta)\,dx\right]_{t=\si}^{t=\tau}=0\mbox{ for }0\leq\si<\tau\leq T.
\end{equation}
By our assumption, $(r,\textbf{v},\theta)$ also satisfies \eqref{I1}--\eqref{I3} in the sense of distribution. Therefore, we similarly have 
\begin{align}
\pa_tr_\e+\mbox{div}_x(r_\e\textbf{v}_{\e})&=R_4^{\e},\label{mollified:eqn:cont}\\
\pa_t(r_\e\textbf{v}_{\e})+\mbox{div}_x(r_\e\textbf{v}_{\e}\otimes\textbf{v}_{\e})+\nabla_xp(r_\e,\theta_\e)&=R_5^{\e},\label{mollified:eqn:mom}
\end{align}
where $R_4^{\e}$ and $R_5^{\e}$ are defined as follows
\begin{align}
R_4^{\e}&:=\mbox{div}_x(r_\e\textbf{v}_{\e})-\mbox{div}_x(r\textbf{v})_\e,\label{def:R4}\\
R_5^{\e}&:=\pa_t(r_\e\textbf{v}_{\e}-(r\textbf{v})_\e)+\mbox{div}_x((r_\e\textbf{v}_{\e}\otimes\textbf{v}_{\e})-(r\textbf{v}\otimes\textbf{v})_\e)+\nabla_x(p(r_\e,\theta_\e)-p(r,\theta)_\e).\label{def:R5}
\end{align}
Modifying \eqref{mollified:eqn:cont} and \eqref{mollified:eqn:mom} we obtain 
\begin{align}
\pa_tr_\e&=-\textbf{v}_{\e}\cdot\nabla_xr_\e-r_\e\mbox{div}_x\textbf{v}_{\e}+R_4^\e,\label{eqn:partial_r}\\
\pa_t \textbf{v}_{\e}&=-\textbf{v}_{\e}\cdot\nabla_x\textbf{v}_{\e}-\frac{1}{r_\e}\nabla_x p(r_\e,\theta_\e)-R_4^{\e}\textbf{v}_{\e}+\frac{1}{r_\e}R_5^{\e}.\label{eqn:partial_v}
\end{align}
We integrate \eqref{mollified:mom1} against $\textbf{v}_{\e}$ to have
\begin{equation*}
\domm\pa_t(\rho_\e\textbf{u}_\e)\cdot\textbf{v}_{\e}+\mbox{div}_x(\rho_\e\textbf{u}_\e\otimes\textbf{u}_\e)\cdot\textbf{v}_{\e}+\nabla_xp(\rho_\e,\vt_\e)\cdot\textbf{v}_{\e}\,dxdt=\domm R_2^{\e}\cdot\textbf{v}_{\e}\,dxdt.
\end{equation*}
Integrating by parts we obtain
\begin{equation}\label{rel_ent:C1}
\left[\int\limits_{\Omega}\rho_\e\textbf{u}_\e\cdot\textbf{v}_{\e}\,dx\right]_{t=\si}^{t=\tau}=\domm\rho_\e\textbf{u}_\e\cdot\pa_t\textbf{v}_{\e}+\nabla_x\textbf{v}_{\e}:(\rho_\e\textbf{u}_\e\otimes\textbf{u}_\e)+p(\rho_\e,\vt_\e)\mbox{div}_x\textbf{v}_{\e}\,dxdt+\domm R_2^{\e}\cdot\textbf{v}_{\e}\,dxdt.
\end{equation}
Next we integrate \eqref{mollified:mom1} against $\frac{1}{2}\abs{\textbf{v}_{\e}}^2$ to get
\begin{equation*}
\frac{1}{2}\domm \pa_t\rho_\e\abs{\textbf{v}_{\e}}^2+\mbox{div}_x(\rho_\e\textbf{u}_\e)\abs{\textbf{v}_{\e}}^2\,dxdt=\frac{1}{2}\domm R_1^{\e}\abs{\textbf{v}_{\e}}^2\,dxdt.
\end{equation*}
Now we do an integration by parts to have
\begin{equation}\label{rel_ent:C2}
\left[\int\limits_{\Omega}\frac{1}{2}\rho_\e\abs{\textbf{v}_{\e}}^2\,dx\right]_{t=\si}^{t=\tau}=\domm \rho_\e\textbf{v}_{\e}\cdot\pa_t\textbf{v}_{\e}+\nabla_x\textbf{v}_{\e}:(\rho_\e\textbf{u}_\e\otimes\textbf{v}_{\e})\,dxdt+\frac{1}{2}\domm R_1^{\e}\abs{\textbf{v}_{\e}}^2\,dxdt.
\end{equation}
Combining \eqref{eq:energy}, \eqref{rel_ent:C1} and \eqref{rel_ent:C2} we obtain
\begin{align}
&\left[\int\limits_{\Omega}\frac{1}{2}\rho\abs{\textbf{u}}^2+\rho e(\rho,\vartheta)-\rho_\e\textbf{u}_\e\cdot\textbf{v}_{\e}+\frac{1}{2}\rho_\e\abs{\textbf{v}_{\e}}^2\,dx\right]_{t=\si}^{t=\tau}\nonumber\\
&=\domm \rho_\e(\textbf{v}_{\e}-\textbf{u}_\e)\cdot\pa_t\textbf{v}_{\e}+\rho_\e\nabla_x\textbf{v}_{\e}:(\textbf{u}_\e\otimes(\textbf{v}_{\e}-\textbf{u}_\e))-p(\rho_\e,\vt_\e)\mbox{div}_x\textbf{v}_{\e}\,dxdt\nonumber\\
&+\domm \frac{1}{2}R_1^{\e}\abs{\textbf{v}_{\e}}^2-R_2^{\e}\cdot\textbf{v}_{\e}\,dxdt.\label{rel_ent:C3}
\end{align}
Using \eqref{eqn:partial_v} in \eqref{rel_ent:C3} we get
\begin{align}
&\left[\int\limits_{\Omega}\frac{1}{2}\rho\abs{\textbf{u}}^2+\rho e(\rho,\vartheta)-\rho_\e\textbf{u}_\e\cdot\textbf{v}_{\e}+\frac{1}{2}\rho_\e\abs{\textbf{v}_{\e}}^2\,dx\right]_{t=\si}^{t=\tau}\nonumber\\
&=\domm \rho_\e\nabla_x\textbf{v}_{\e}:((\textbf{u}_\e-\textbf{v}_{\e})\otimes(\textbf{v}_{\e}-\textbf{u}_\e))\,dxdt\nonumber\\
&-\domm\frac{\rho_\e}{r_\e}(\textbf{v}_{\e}-\textbf{u}_\e)\cdot(\pa_rp(r_\e,\theta_\e)\nabla_xr_\e+\pa_{\theta_{\e}}p(r_\e,\theta_\e)\nabla_x\theta_\e)
 +p(\rho_\e,\vt_\e)\mbox{div}_x\textbf{v}_{\e}\,dxdt\nonumber\\
&+\domm  \rho_\e(\textbf{v}_{\e}-\textbf{u}_\e)\cdot\left(\frac{1}{r_\e}R_5^{\e}-R_4^{\e}\textbf{v}_{\e}\right)+\frac{1}{2}R_1^{\e}\abs{\textbf{v}_{\e}}^2-R_2^{\e}\cdot\textbf{v}_{\e}\,dxdt.\label{rel_ent:C4}
\end{align}
Next we integrate \eqref{eq:mollified:s} against $\theta_{\e}$ and get
\begin{equation*}
\domm \pa_t(\rho_\e s(\rho_\e,\vt_\e))\theta_{\e}+\mbox{div}_x(\rho_\e s(\rho_\e,\vt_\e)\textbf{u}_\e)\theta_{\e} \,dxdt\geq\domm R_6^{\e}\theta_{\e}\,dxdt.
\end{equation*}
Integrating by parts we obtain
\begin{equation}\label{integral:eqn:theta1}
\left[\int\limits_{\Omega}-\theta_{\e} \rho_\e s(\rho_\e,\vt_\e)\,dx\right]_{t=\si}^{t=\tau}\leq-\domm  \rho_\e s(\rho_\e,\vt_\e)(\pa_t\theta_{\e}+\textbf{u}_\e\cdot\nabla_x\theta_{\e})\,dxdt -\domm R_6^{\e}\theta_{\e}\,dxdt.
\end{equation}
Next we integrate \eqref{mollified:cont1} against $H_{\theta_{\e}}(r_\e,\theta_\e)$ to get
\begin{equation}
\domm \pa_t\rho_\e\pa_r	H_{\theta_{\e}}(r_\e,\theta_\e)+\mbox{div}_x(\rho_\e\textbf{u}_\e)\pa_rH_{\theta_{\e}}(r,\theta_{\e})\,dxdt=\domm R_1^{\e}	H_{\theta_{\e}}(r,\theta_{\e})\,dxdt.
\end{equation}
After an integration by parts we have
\begin{align}
&\left[\int\limits_{\Omega} \rho_\e\pa_r	H_{\theta_{\e}}(r_\e,\theta_{\e})\,dx\right]_{t=\si}^{t=\tau}\nonumber\\
&=\domm \rho_\e\pa_t\pa_r	H_{\theta_\e}(r_\e,\theta_{\e})+\rho_\e\textbf{u}_\e\cdot\nabla_x\pa_rH_{\theta_{\e}}(r_\e,\theta_{\e})+\domm R_1^{\e}	H_{\theta_{\e}}(r_\e,\theta_\e)\,dxdt\nonumber\\
&=\domm \rho_\e\pa_{rr}H_{\theta_\e}(r_\e,\theta_\e)\left(\pa_tr_\e+\textbf{u}_\e\cdot\nabla_x r_\e\right)\,dxdt+\domm \rho_\e\pa_{r\theta_{\e}}H_{\theta_\e}(r_\e,\theta_\e)\left(\pa_t\theta_\e+\textbf{u}_\e\cdot\nabla_x \theta_\e\right)\nonumber\\
&+\domm R_1^{\e}	H_{\theta_\e}(r_\e,\theta_\e)\,dxdt.\label{integral:eq:rhoH1}
\end{align}
Applying \eqref{P2} and \eqref{eqn:partial_r} in \eqref{integral:eq:rhoH1} we obtain
\begin{align}
\left[\int\limits_{\Omega} \rho_\e\pa_r	H_{\theta_{\e}}(r_\e,\theta_{\e})\,dx\right]_{t=\si}^{t=\tau}
&=\domm \frac{\rho_\e}{r_\e}\pa_{r}p(r_\e,\theta_\e)\nabla_x r_\e\cdot(\textbf{u}_\e-\textbf{v}_{\e})-{\rho_\e}\pa_{r}p(r_\e,\theta_\e)\mbox{div}_x\textbf{v}_{\e}\,dxdt\nonumber\\
&-\domm \rho_\e(s(r_\e,\theta_\e)-\pa_{\theta_{\e}}p(r_\e,\theta_\e))\left(\pa_t\theta_\e+\textbf{u}_\e\cdot\nabla_x \theta_\e\right)\,dxdt\nonumber\\
&+\domm R_1^{\e}	H_{\theta_\e}(r_\e,\theta_\e)+\rho_\e\pa_{rr}H_{\theta_\e}(r_\e,\theta_\e)R_4^{\e}\,dxdt.\label{integral:eq:rhoH2}
\end{align}
From \eqref{P15} we have
%
\begin{align}
&\left[\int\limits_{\Omega}{r}_\e\pa_{r} H_{\theta_{\e}}({r}_\e, \theta_{\e})-H_{ \theta_{\e}}({r}_\e, \theta_{\e})\,dx\right]_{t=\si}^{t=\tau}\nonumber\\
&=\domm \pa_r p(r_\e,\theta_{\e})\pa_tr_\e+\pa_\theta p(r_\e\theta_{\e})\pa_t\theta_{\e}\,dxdt\nonumber\\
&=\domm -\pa_r p(r_\e,\theta_{\e})\textbf{v}_{\e}\cdot\nabla_xr_\e-\pa_r p(r_\e,\theta_{\e})r_\e\mbox{div}_x\textbf{v}_{\e}+\pa_\theta p(r_\e,\theta_{\e})\pa_t\theta_{\e}+ \pa_r p(r_\e,\theta_{\e})R_4^\e\,dxdt.\label{integral:eq:H1}
\end{align}
Applying integration by parts we obtain
\begin{align}
&\domm -\pa_r p(r_\e,\theta_{\e})\textbf{v}_{\e}\cdot\nabla_xr_\e\,dxdt\nonumber\\
&=\domm-\textbf{v}_{\e}\cdot\left(\pa_r p(r_\e,\theta_\e)\nabla_xr_\e+\pa_{\theta} p(r_\e,\theta_\e)\nabla_x\theta_\e\right)\,dxdt+\domm \textbf{v}_{\e}\cdot\left(\pa_\theta p(r_\e,\theta_\e)\nabla_x\theta_\e\right)\,dxdt\nonumber\\
&=\domm p(r_\e,\theta_\e)\mbox{div}_\e\textbf{v}_{\e}+\pa_\theta p(r_\e,\theta_\e)\textbf{v}_{\e}\cdot\nabla_x\theta_\e\,dxdt.\label{integral:eq:p1}
\end{align}
Using \eqref{integral:eq:p1} in \eqref{integral:eq:H1} we get
\begin{align}
&\left[\int\limits_{\Omega}\left[{r}_\e\pa_{r} H_{\theta_{\e}}({r}_\e, \theta_{\e})-H_{ \theta_{\e}}({r}_\e, \theta_{\e})\right]dx\right]_{t=\si}^{t=\tau}\nonumber\\
&=\domm (p(r_\e,\theta_\e)-\pa_r p(r_\e,\theta_{\e})r_\e)\mbox{div}_x\textbf{v}_{\e}+\pa_\theta p(r_\e,\theta_{\e})(\pa_t\theta_{\e}+\textbf{v}_{\e}\cdot\nabla_x\theta_\e)\,dxdt
+\domm \pa_r p(r_\e,\theta_{\e})R_4^\e.\label{integral:eq:H2}
\end{align}
Now we invoke Claim \ref{cl1} with $F=\frac{(\g-1)\theta_{\e}}{r_\e}(\rho_\e s(\rho_\e,\vartheta_\e)-r_\e s(r_\e,\theta_{\e}))$ to obtain
\begin{align}
&- \domm\pa_\theta p(r_\e,\theta_\e)\mbox{div}_x\textbf{v}_{\e} \frac{(\g-1)\theta_{\e}}{r_\e}(\rho_\e s(\rho_\e,\vartheta_\e)-r_\e s(r_\e,\theta_{\e}))dxdt\nonumber\\
&=\dip\domm r_\e\pa_{\theta_{\e}} s(r_\e,\theta_\e)\left(\pa_t \theta_\e+\nabla_x \theta_\e\cdot\textbf{v}_{\e}\right)\frac{(\g-1)\theta_{\e}}{r_\e}(\rho_\e s(\rho_\e,\vartheta_\e)-r_\e s(r_\e,\theta_{\e}))\,dxdt+\sum\limits_{k=1}^{4}{Q}_k^{\e}\label{partial_theta:s}
\end{align}
where ${Q}_1^{\e},{Q}_2^{\e},{Q}_3^{\e},{Q}_4^{\e}$ are defined as follows
\begin{align*}
{Q}_1^{\e}&:=\domm\left[\pa_t(rs(r,\theta))_\e-\pa_t(r_{\e} s(r_{\e},\theta_{\e}))\right]\frac{(\g-1)\theta_{\e}}{r_\e}(\rho_\e s(\rho_\e,\vartheta_\e)-r_\e s(r_\e,\theta_{\e}))dxdt,\\
{Q}_2^{\e}&:=\domm\left[\mbox{div}_x(r s(r,\theta)\textbf{v})_\e-\mbox{div}_x(r_{\e} s(r_{\e},\theta_{\e})\textbf{v}_{\e})\right]\frac{(\g-1)\theta_{\e}}{r_\e}(\rho_\e s(\rho_\e,\vartheta_\e)-r_\e s(r_\e,\theta_{\e}))dxdt,\\
{Q}_3^{\e}&:=\domm s(r_{\e},\theta_{\e})\left(\mbox{div}_x(r_{\e}\textbf{v}_{\e})-\mbox{div}_x(r\textbf{v})_\e\right)\frac{(\g-1)\theta_{\e}}{r_\e}(\rho_\e s(\rho_\e,\vartheta_\e)-r_\e s(r_\e,\theta_{\e}))dxdt,
\end{align*}
\begin{align*}
{Q}_4^{\e}&:=\domm\frac{1}{r_{\e}}\pa_\theta p(r_{\e},\theta_{\e})\left(\mbox{div}_x(r\textbf{v})_\e-\mbox{div}_x(r_{\e}\textbf{v}_{\e})\right)\frac{(\g-1)\theta_{\e}}{r_\e}(\rho_\e s(\rho_\e,\vartheta_\e)-r_\e s(r_\e,\theta_{\e}))dxdt.
\end{align*}
Clubbing \eqref{rel_ent:C4}, \eqref{integral:eqn:theta1}, \eqref{integral:eq:rhoH2}, \eqref{integral:eq:H2} and \eqref{partial_theta:s} we have
\begin{equation*}
\left[\int\limits_{\Omega}\mathfrak{E}(\rho_\e,\textbf{u}_\e,\vartheta_\e|r_\e,\textbf{v}_{\e},\theta_\e)+\frac{1}{2}\rho\abs{\textbf{u}}^2-\frac{1}{2}\rho_\e\abs{\textbf{u}_\e}^2+\rho e(\rho,\vartheta)-\rho_\e e(\rho_\e,\vartheta_\e)\,dx\right]_{t=\si}^{t=\tau}\leq I_1+I_2+Q_5^{\e}
\end{equation*}
where $I_1,I_2$ and $Q_5^{\e}$ are defined as follows
\begin{align*}
I_1&:=\domm \rho_\e\nabla_x\textbf{v}_{\e}:((\textbf{u}_\e-\textbf{v}_{\e})\otimes(\textbf{v}_{\e}-\textbf{u}_\e))- [p(\rho_\e,\vt_\e)-\pa_rp(r_\e,\theta_\e)(\rho_\e-r_\e)]\mbox{div}_x\textbf{v}_{\e}\,dxdt\nonumber\\
&+\domm \left[\pa_\theta p(r_\e,\theta_\e)\frac{(\g-1)\theta_{\e}}{r_\e}(\rho_\e s(\rho_\e,\vartheta_\e)-r_\e s(r_\e,\theta_{\e}))+p(r_\e,\theta_\e)\right]\mbox{div}_x\textbf{v}_{\e} \,dxdt,\nonumber\\
I_2&:=-\domm  \rho_\e (s(\rho_\e,\vt_\e)-s(r_\e,\theta_\e))(\pa_t\theta_{\e}+\textbf{u}_\e\cdot\nabla_x\theta_{\e})\,dxdt\nonumber\\
&+\domm \left(r_\e-{\rho_\e}\right)r_\e s(r_\e,\theta_\e)(r,\theta_{\e})(\pa_t\theta_{\e}+\textbf{v}_{\e}\cdot\nabla_x\theta_\e)\,dxdt\nonumber\\
&+\dip\domm r_\e\pa_{\theta_{\e}} s(r_\e,\theta_\e)\left(\pa_t \theta_\e+\nabla_x \theta_\e\cdot\textbf{v}_{\e}\right)\frac{(\g-1)\theta_{\e}}{r_\e}(\rho_\e s(\rho_\e,\vartheta_\e)-r_\e s(r_\e,\theta_{\e}))\,dxdt
\end{align*} and
\begin{align*}
Q_5^{\e}&:=\domm  \rho_\e(\textbf{v}_{\e}-\textbf{u}_\e)\cdot\left(-R_4^{\e}\textbf{v}_{\e}+\frac{1}{r_\e}R_5^{\e}\right)+\frac{1}{2}R_1^{\e}\abs{\textbf{v}_{\e}}^2-R_2^{\e}\cdot\textbf{v}_{\e}-R_6^{\e}\theta_{\e}\,dxdt\nonumber\\
&+{Q}_1^{\e}+{Q}_2^{\e}+{Q}_3^{\e}+{Q}_4^{\e}-\domm R_1^{\e}	H_{\theta_\e}(r_\e,\theta_\e)+\rho_\e\pa_{rr}H_{\theta_\e}(r_\e,\theta_\e)R_4^{\e}+\domm \pa_r p(r,\theta_{\e})R_4^\e\,dxdt.
\end{align*}
Rest of the proof is similar to Theorem \ref{theorem1} except the estimation of $Q_5^{\e}$. Note that $I_1$ and $I_2$ can be estimated in a similar way as we have done for $\mathcal{J}_1^{\e},\mathcal{J}_2^{\e}$ in the proof of Theorem \ref{theorem1}. Hence, we get
\begin{equation}
I_1+I_2\leq \domm \tilde{C}(t)\mathfrak{E}(\rho_\e,\textbf{u}_\e,\vartheta_\e|r_\e,\textbf{v}_{\e},\theta_\e)\,dxdt
\end{equation}
for some $\tilde{C}\in L^1((0,T),[0,\f))$. 
Now we bound $Q^{\e}_5$ by using commutator estimate. It is not same as Theorem \ref{theorem1} or \cite{FGJ}. Here we rely on a different type of commutator estimates mostly used \cite{CET,Drivas,FeGwGwWi,GwMiGw} for proving energy equality or the positive part of Onsager conjecture. To make the presentation brief, we only show the estimation for the term $\domm R_2^{\e}\cdot\textbf{v}_{\e}\,dxdt$. Other terms of $Q_5^{\e}$ can be estimated in a similar way (see also \cite{CET,Drivas,FeGwGwWi,GwMiGw}). Using integration by parts we have
\begin{align*}
-\domm R_2^{\e}\cdot\textbf{v}_{\e}\,dxdt&=\domm (\rho_\e\textbf{u}_\e-(\rho\textbf{u})_\e)\cdot\pa_t\textbf{v}_{\e}+(\rho_\e\textbf{u}_\e\otimes\textbf{u}_\e-(\rho\textbf{u}\otimes\textbf{u})_\e):\nabla_x\textbf{v}_{\e}\,dxdt\\
&+\domm (p(\rho_\e,\vt_\e)-p(\rho,\vt)_\e)\mbox{div}_x\textbf{v}_{\e}\,dxdt+\left[\int\limits_{\Omega}(\rho_\e\textbf{u}_\e-(\rho\textbf{u})_\e)\cdot\textbf{v}_{\e}\,dx\right]_{t=\si}^{t=\tau}.
\end{align*}
Suppose that for $i=1,2,3$, $X_i,Y_i$ are Banach spaces defined as follows
\begin{align*}
&X_1=L^{3/2}((0,T)\times\Omega,\R^N), X_2=L^{3/2}((0,T)\times\Omega,\R^{N\times N}),\,X_3=L^{3/2}((0,T)\times\Omega,\R),\\
&Y_1=L^{3}((0,T)\times\Omega,\R^{N+1}),\, Y_2=L^{3}((0,T)\times\Omega,\R^{2N+1}),\,Y_3=L^{3}((0,T)\times\Omega,\R^{2}).
\end{align*}
By applying \cite[Lemma 3.1]{GwMiGw} we obtain 
\begin{align*}
\|\rho_\e\textbf{u}_\e-(\rho\textbf{u})_\e\|_{X_1}&\leq C_0\left( \|(\rho_\e,\textbf{u}_\e)-(\rho,\textbf{u})\|_{Y_1}^2+\sup\limits_{\tilde{y}\in\R^2,\abs{\tilde{y}}<\e}\|(\rho,\textbf{u})(\cdot-\tilde{y})-(\rho,\textbf{u})\|_{Y_1}^2\right),\\
\|p(\rho_\e,\vt_\e)-p(\rho,\vt)_\e\|_{X_3}&\leq C_0\left( \|(\rho_\e,\theta_\e)-(\rho,\theta_{\e})\|_{Y_3}^2+\sup\limits_{\tilde{y}\in\R^{2},\abs{\tilde{y}}<\e}\|(\rho,\theta_{\e})(\cdot-\tilde{y})-(\rho,\theta_{\e})\|_{Y_3}^2\right),
\end{align*}
and 
\begin{align*}
\|\rho_\e\textbf{u}_\e\otimes\textbf{u}_\e-(\rho\textbf{u}\otimes\textbf{u})_\e\|_{X_2}
&\leq C_0 \|(\rho_\e,\textbf{u}_\e,\textbf{u}_\e)-(\rho,\textbf{u},\textbf{u})\|_{Y_2}^2\\
&+C_0\sup\limits_{\tilde{y}\in\R^2,\abs{\tilde{y}}<\e}\|(\rho,\textbf{u},\textbf{u})(\cdot-\tilde{y})-(\rho,\textbf{u},\textbf{u})\|_{Y_2}^2.
\end{align*}
Now we use regularity assumption \ref{a1}, H\"older inequality and estimates \eqref{Besov_estimate1}--\eqref{Besov_estimate3} for Besov function to have
\begin{equation}
-\domm R_2^{\e}\cdot\textbf{v}_{\e}\,dxdt\leq C_1(\e^{3\B-1}+\e^{\B+1})+\left[\int\limits_{\Omega}(\rho_\e\textbf{u}_\e-(\rho\textbf{u})_\e)\cdot\textbf{v}_{\e}\,dx\right]_{t=\si}^{t=\tau}.
\end{equation}
Note that $(\rho_\e\textbf{u}_\e-(\rho\textbf{u})_\e)\cdot\textbf{v}_{\e}(t,x)\rr0$ as $\e\rr0$ for a.e. $(t,x)\in(0,T)\times\Omega$. Hence we have
\begin{equation}
-\domm R_2^{\e}\cdot\textbf{v}_{\e}\,dxdt\rr0\mbox{ as }\e\rr0.
\end{equation}
Therefore we prove that $Q_5^{\e}\rr0$ as $\e\rr0$. This implies,
\begin{equation}
\left[\int\limits_{\Omega}\mathfrak{E}(\rho,\textbf{u},\vartheta|r,\textbf{v},\theta)\,dx\right]_{t=\si}^{t=\tau}
\leq \domm \tilde{C}(t)\mathfrak{E}(\rho,\textbf{u},\vartheta|r,\textbf{v},\theta)\,dxdt.
\end{equation}
By a similar argument as in the proof of Theorem \ref{theorem1}, we conclude Theorem \ref{theorem2}.
\end{proof}
\section{Application to isentropic Euler system}
We have proved in Theorem \ref{theorem2} that regularity of the Besov solution can be relaxed up to $\al>1/3$ to show uniqueness of weak solution satisfying a one-sided Lipschitz condition. A similar result is true for isentropic Euler system which reads as follows,
\begin{align}
\pa_t\rho+\mbox{div}_x(\rho\textbf{u})&=0\mbox{ for }(t,x)\in(0,T)\times\Omega,\label{eq:isen1}\\
\pa_t(\rho\textbf{u}+\mbox{div}_x(\rho\textbf{u}\otimes\textbf{u})+\nabla_x(\rho^{\g})&=0\mbox{ for }(t,x)\in(0,T)\times\Omega,\label{eq:isen2}
\end{align}
where $\g>1$. Similar to complete Euler system, we work with periodic solutions of \eqref{eq:isen1}--\eqref{eq:isen2} and spatial domain $\Omega$ defined as in \eqref{domain}. For the system \eqref{eq:isen1}--\eqref{eq:isen2}, we prove the following uniqueness result,

	\begin{theorem}\label{theorem:isentropic}
	 Let $(\rho,\textbf{u}),(r,\textbf{v})\in C\left([0,T],L^1(\Omega,\re^{1+N})\right)$ be two weak solutions to the isentropic Euler system \eqref{eq:isen1}--\eqref{eq:isen2} with the initial data $(\rho_0,\textbf{v}_0)$. Moreover, we assume the following properties:
	\begin{enumerate}[(i)]
		\item   $(\rho,\textbf{u}),(r,\textbf{v})$ satisfy the regularity assumption \eqref{alpha-beta1} as in Theorem \ref{theorem2}.
		
		\item Assumption \ref{a} in Theorem \ref{theorem1} holds for $r$ and $\textbf{v}$ with some constants $\underline{r},\overline{r},C>0$. We also assume that $\textbf{v}$ satisfies \eqref{Lip} for some $\mathcal{C}_2\in L^1((0,T),[0,\f))$.  
	\end{enumerate}
	Then the following holds	
	\begin{equation*}
	\rho\equiv r \mbox{ and } \textbf{u}\equiv\textbf{v}\mbox{ for a.e. }(t,x)\in(0,T)\times\Omega.
	\end{equation*} 	
\end{theorem}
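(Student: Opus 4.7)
The plan is to adapt the proof strategy of Theorem \ref{theorem2} to the isentropic setting, where the analysis simplifies considerably because no entropy inequality and no temperature variable appear. First I would mollify both weak solutions, setting $\rho_\varepsilon := \rho \ast \eta_\varepsilon$, $\mathbf{u}_\varepsilon := \mathbf{u}\ast \eta_\varepsilon$, and similarly $r_\varepsilon, \mathbf{v}_\varepsilon$, then mollify \eqref{eq:isen1}--\eqref{eq:isen2} for each solution to obtain
\begin{align*}
\partial_t \rho_\varepsilon + \mathrm{div}_x(\rho_\varepsilon \mathbf{u}_\varepsilon) &= R_1^\varepsilon,\\
\partial_t(\rho_\varepsilon \mathbf{u}_\varepsilon) + \mathrm{div}_x(\rho_\varepsilon \mathbf{u}_\varepsilon \otimes \mathbf{u}_\varepsilon) + \nabla_x(\rho_\varepsilon^\gamma) &= R_2^\varepsilon,
\end{align*}
and analogous equations for $(r_\varepsilon, \mathbf{v}_\varepsilon)$ with remainders $R_3^\varepsilon, R_4^\varepsilon$ mirroring \eqref{def:R1}--\eqref{def:R5}, where the remainders encode the failure of mollification to commute with the nonlinear operations.

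Next I would work with the isentropic relative energy
\begin{equation*}
\mathfrak{E}(\rho,\mathbf{u}\,|\,r,\mathbf{v}) = \tfrac{1}{2}\rho|\mathbf{u}-\mathbf{v}|^2 + \tfrac{1}{\gamma-1}\rho^\gamma - \tfrac{1}{\gamma-1} r^\gamma - \tfrac{\gamma}{\gamma-1} r^{\gamma-1}(\rho - r),
\end{equation*}
and derive an integral identity for $\int_\Omega \mathfrak{E}(\rho_\varepsilon,\mathbf{u}_\varepsilon|r_\varepsilon,\mathbf{v}_\varepsilon)\,dx$ by testing the mollified equations against the appropriate combinations of $r_\varepsilon,\mathbf{v}_\varepsilon$ (i.e.\ $\tfrac{1}{2}|\mathbf{v}_\varepsilon|^2$, $\mathbf{v}_\varepsilon$, and $\tfrac{\gamma}{\gamma-1}r_\varepsilon^{\gamma-1}$), exactly in the spirit of \eqref{rel_ent:C1}--\eqref{rel_ent:C4} but without the $\theta_\varepsilon$--contributions. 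The right-hand side splits into a convective term of the form $\int \rho_\varepsilon \nabla_x \mathbf{v}_\varepsilon : ((\mathbf{u}_\varepsilon-\mathbf{v}_\varepsilon)\otimes(\mathbf{v}_\varepsilon-\mathbf{u}_\varepsilon))\,dxdt$, a relative pressure term, and a remainder $Q^\varepsilon$ containing all products of the $R_i^\varepsilon$ against bounded quantities.

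Using the one-sided Lipschitz bound \eqref{Lip} on $\mathbf{v}$ together with its persistence under mollification (by averaging \eqref{Lip} against $\eta_\varepsilon$), the convective term is bounded by $\int \mathcal{C}_2(t)\,\mathfrak{E}(\rho_\varepsilon,\mathbf{u}_\varepsilon|r_\varepsilon,\mathbf{v}_\varepsilon)\,dx$. The relative pressure $\rho_\varepsilon^\gamma - r_\varepsilon^\gamma - \gamma r_\varepsilon^{\gamma-1}(\rho_\varepsilon - r_\varepsilon)$ is nonnegative by strict convexity of $\rho \mapsto \rho^\gamma$, and since $r$ is bounded above and below (hence the same holds for $r_\varepsilon$ uniformly in $\varepsilon$), it is comparable to $|\rho_\varepsilon - r_\varepsilon|^2$, which is itself controlled by $\mathfrak{E}$. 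Combined, these produce a Grönwall-type inequality modulo the remainder $Q^\varepsilon$.

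The main obstacle is controlling $Q^\varepsilon$ as $\varepsilon \to 0$. Here I would invoke commutator estimates in the spirit of \cite{CET, FeGwGwWi, GwMiGw}: for $f,g \in B^{\beta,\infty}_3$ one has $\|(fg)_\varepsilon - f_\varepsilon g_\varepsilon\|_{L^{3/2}} = O(\varepsilon^{2\beta})$, and for triple products $fgh$ one gets $O(\varepsilon^{3\beta})$ in $L^1$; when paired against derivatives of mollified quantities, which via \eqref{Besov_estimate3} contribute a factor $\varepsilon^{\beta-1}$, the worst scaling becomes $\varepsilon^{3\beta-1}$, which vanishes precisely when $\beta > 1/3$. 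The temporal boundary contributions $[\int_\Omega(\rho_\varepsilon \mathbf{u}_\varepsilon - (\rho\mathbf{u})_\varepsilon)\cdot \mathbf{v}_\varepsilon\,dx]_\sigma^\tau$ vanish by dominated convergence using $(\rho,\mathbf{u}) \in C([0,T],L^1)$ and the uniform bound on $\mathbf{v}$. Passing $\varepsilon \to 0$ yields
\begin{equation*}
\left[\int_\Omega \mathfrak{E}(\rho,\mathbf{u}\,|\,r,\mathbf{v})\,dx\right]_{t=\sigma}^{t=\tau} \leq \int_\sigma^\tau \tilde{\mathcal{C}}(t) \int_\Omega \mathfrak{E}(\rho,\mathbf{u}\,|\,r,\mathbf{v})\,dx\,dt,
\end{equation*}
for some $\tilde{\mathcal{C}} \in L^1((0,T))$. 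Sending $\sigma \to 0^+$ using the common initial datum $(\rho_0,\mathbf{v}_0)$ and applying Grönwall's lemma forces $\mathfrak{E} \equiv 0$, which by the coercivity argument above gives $\rho \equiv r$ and $\mathbf{u} \equiv \mathbf{v}$.
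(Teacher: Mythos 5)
Your proposal is correct and follows essentially the same route the paper intends: the paper omits this proof, stating only that it is a ``suitable adaptation of the method in Section~\ref{proof2} with appropriate choice of relative entropy'' (taken from \cite{FGJ,FeiKre}), and your sketch is exactly that adaptation, with the standard isentropic relative energy, the convective term controlled by \eqref{Lip}, the pressure remainder by convexity of $\rho\mapsto\rho^\gamma$, and the commutator terms vanishing at rate $\e^{3\B-1}$ for $\B>1/3$. No discrepancies with the paper's argument for Theorem \ref{theorem2}, on which this relies.
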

Proof of Theorem \ref{theorem:isentropic} follows from a suitable adaptation of the method in section \ref{proof2} with appropriate choice of relative entropy. To make the presentation brief we omit the proof.  Suitable choice of relative entropy for the isentropic Euler system \eqref{eq:isen1}--\eqref{eq:isen2} can be found in \cite{FGJ,FeiKre}.

Note that in order to prove the uniqueness of the solution satisfying \eqref{Lip}, Theorem \ref{theorem:isentropic} requires less regularity than \cite[Theorem 2.1]{FGJ}. But on the same point, Theorem \ref{theorem:isentropic} proves the uniqueness among the class of Besov solutions whereas \cite[Theorem 2.1]{FGJ} gives uniqueness within weak solutions obeying energy inequality.

\vspace*{-.25cm}

	\section*{Appendix}
	
	\begin{proof}[Proof of Lemma \ref{comm}]
		We write 
		\begin{equation*}
		\nabla_y G( \mathfrak{F}_\e ) - \nabla_y  G(\mathfrak{F})_\e
		=DG(\mathfrak{F}_\e)\nabla_y\mathfrak{F}_\e-DG(\mathfrak{F})\nabla_y\mathfrak{F}_\e+DG(\mathfrak{F})\nabla_y\mathfrak{F}_\e- \nabla_y  G(\mathfrak{F})_\e.
		\end{equation*}
		Next we estimate 
		\begin{align*}
			\|DG(\mathfrak{F}_\e)\nabla_y\mathfrak{F}_\e-DG(\mathfrak{F})\nabla_y\mathfrak{F}_\e\|_{L^\frac{p}{2}}&=\left\|\sum\limits_{i=1}^{k}\left[\pa_iG(\mathfrak{F}_\e)-\pa_iG(\mathfrak{F})\right]\nabla_y\mathfrak{f}_i\right\|_{L^\frac{p}{2}}\\
			&=\left\|\sum\limits_{i=1}^{k}\sum\limits_{j=1}^{k}\pa_{ij}G(\mathfrak{F}_{ij}^*)\left[(\mathfrak{f}_j)_\e-\mathfrak{f}\right]\nabla_y\mathfrak{f}_i\right\|_{L^\frac{p}{2}}\\
			&\leq\sum\limits_{|(\g_1,\cdots,\g_k)=\g|=2}\e^{\sum\limits_{j=1}^{k}\g_j\al_j-1}(\sup|\pa^\g G|)\left(\prod\limits_{j=1}^{k}|\mathfrak{f}_j|^{\g_j}_{B^{\al_j,\f}_p}\right).
		\end{align*}
		For the other part we do the following
		\begin{align*}
			& DG(\mathfrak{F})\nabla_y\mathfrak{F}_\e- \nabla_y  G(\mathfrak{F})_\e\\
			&=\sum\limits_{i=1}^{k}\left[\pa_iG(\mathfrak{F})\nabla_y(\mathfrak{f}_i*\eta_\e)\right]-\nabla_y(G(\mathfrak{F})*\eta_\e)
			\\
			&=\int\limits_{\tilde{\mathcal{U}}}\left( \sum\limits_{i=1}^{k}\left[\pa_iG(\mathfrak{F}(x))\mathfrak{f}_i(x-y)\nabla_y\eta_\e(y)\right]-G(\mathfrak{F}(x-y))\nabla_y\eta_\e(y)\right)dy\\
			&=\int\limits_{\tilde{\mathcal{U}}} \left[G(\mathfrak{F}(x))-\sum\limits_{i=1}^{k}\pa_iG(\mathfrak{F}(x))\left(\mathfrak{f}_i(x)-\mathfrak{f}_i(x-y)\right)-G(\mathfrak{F}(x-y))\right]\nabla_y\eta_\e(y)dy\\
			&=\int\limits_{\tilde{\mathcal{U}}}\sum\limits_{i,j=1}^{k}\pa_{ij}G(\mathfrak{F}^*_{ij}(x,y))\left(\mathfrak{f}_i(x)-\mathfrak{f}_i(x-y)\right)\left(\mathfrak{f}_j(x)-\mathfrak{f}_j(x-y)\right)\nabla_y\eta_\e(y)dy.
		\end{align*}
		Therefore we obtain 
		\begin{align*}
			\|DG(\mathfrak{F})\nabla_y\mathfrak{F}_\e- \nabla_y  G(\mathfrak{F})_\e\|_{L^\frac{p}{2}}
			&\leq\sum\limits_{|(\g_1,\cdots,\g_k)=\g|=2}\e^{\sum\limits_{j=1}^{k}\g_j\al_j-1}(\sup|\pa^\g G|)\left(\prod\limits_{j=1}^{k}|\mathfrak{f}_j|^{\g_j}_{B^{\al_j,\f}_p}\right).
		\end{align*}
		This completes the proof of Lemma \ref{comm}.
	\end{proof}

	\noindent\textbf{Acknowledgement.} We acknowledge the support of the Department of Atomic Energy, Government of India, under project no. 12-R\&D-TFR-5.01-0520. The first author would like to thank Inspire faculty-research grant DST/INSPIRE/04/2016/000237.
	Authors are very thankful to Eduard Feireisl for enlightening discussions.
	\vspace*{.25cm}

	\noindent\textbf{Disclosure statement:}
	On behalf of all authors, the corresponding author states that there is no conflict of interest.
\vspace*{-0.5cm}
	\section*{References}
	
\end{document}